\newcommand*{\wo}{\mathcal O}
\newcommand*{\wc}{\widehat \xi}
\newcommand*{\wu}{\widehat u}
\newcommand*{\ov}{\overline}
\newcommand*{\wx}{\widehat x}
\newcommand*{\wou}{\widehat{\overline u}}
\newcommand*{\wF}{\widehat F}
\newtheorem{theorem}{Theorem}
\newtheorem{corollary}{Corollary}
\newtheorem{proposition}{Proposition}
\newtheorem{definition}{Definition}
\begin{document}


\title[Controllability and second-order optimality conditions]{Controllability and necessary second-order
optimality conditions
in optimal control problems}
\author{E.\,R.~Avakov, G.\,G.~Magaril-Il'yaev}

\thanks{This research was carried out with the financial support of the Russian Foundation for Basic
Research (grant no.\ 17-01-00649).}

\address{Institute of Control Sciences\\ of the Russian Academy of Sciences;\\ Moscow State University}
\subjclass{49J27, 49J15}

\maketitle

\begin{center}
{\sc Abstract}
\end{center}
\medskip

The paper puts forward sufficient conditions for local
controllability of a~control dynamical system. The results obtained
are meaningful in the case when the linear approximation to this
system is not completely controllable. As a~corollary, we obtain
necessary second-order optimality conditions for a~general optimal
control problem.

Bibliography: 12 titles.


\section*{Introduction}
The main result of the present paper gives sufficient conditions for local
controllability of an abstract control system. As a~direct corollary of this result we
obtain second-order optimality conditions for an abstract variant of an optimal control
problem. The general results obtained below are applied to a~control dynamical system,
which establishes sufficient conditions for its local controllability which are
meaningful in the case when the linear approximation to this system is not completely
controllable. From these conditions we readily obtain necessary second-order optimality
conditions for a strong minimum in an optimal control problem.  Consideration of an
abstract control system, which in our opinion has an independent interest, enables one to
give a~complete investigation of the questions of interest, without the distraction of
special properties of dynamical systems described by ordinary differential equations.

The paper has three sections. In the first section we consider an abstract control
system, prove the main result on the conditions of its local controllability, and
establish a~corollary on necessary second-order optimality conditions for an abstract
optimal control problem. Note that an important tool for the proof of the main result is
the  special inverse theorem, which has independent interest. The second section is
concerned with applications of these results to a~control dynamical system of a~fairly
general form. At the end of the second section some comments are given. In the third
section we apply the theorem on controllability of a~dynamical system to the so-called
systems of first order of abnormality. We shall also consider some examples showing, in
particular, that the conditions guaranteeing the local controllability of the system are
substantial.

\section{Abstract control system}
Let $X$, $Y$ and $Z$ be normed linear spaces, $\mathcal U\subset Z$, $F\colon
\mathbb R^n\times X\times\mathcal U\to Y$, $f\colon \mathbb R^n\times X\to\mathbb
R^{m_1}$ and $g\colon \mathbb R^n\times X\to\mathbb R^{m_2}$. Consider the control system
\begin{equation}\label{1}
F(\xi,x,u)=0,\quad u\in \mathcal U,\quad f(\xi,x)\le0,\quad g(\xi,x)=0,
\end{equation}
where the inequality is understood coordinatewise.

System \eqref{1} models a~control dynamical system encountered in
optimal control problems: $x$~is the phase variable, $u$~is the control,
the variable~$\xi$ allows one to take into account fairly general boundary conditions.

A point $(\widehat \xi ,\widehat x ,\widehat u )\in\mathbb R^n\times X\times\mathcal U$ will be called {\it admissible} for the
control system \eqref{1} if it satisfies all the relations in~\eqref{1} and
$\widehat u \in\operatorname{int}\mathcal U$.

\begin{definition}\label{d1}
A~control system \eqref{1} will be called \textit{locally controllable} with respect to
an admissible point $(\widehat \xi ,\widehat x ,\widehat u )$ if, for any
neighbourhood~$W$ of the point $(\widehat \xi ,\widehat x )$, there exist neighbourhoods
$W_1$ and $W_2$ of the  $\mathbb R^{m_1}$- and $\mathbb R^{m_2}$-origins, respectively,
such that for any $y=(y_1,y_2)\in W_1\times W_2$ there exists an element
$(\xi_y,x_y,u_y)\in W\times \mathcal U$ for which $F(\xi_y,x_y,u_y)=0$, \
$f(\xi_y,x_y)\le y_1$ and  \ $g(\xi_y,x_y)=y_2$.
\end{definition}

We introduce some notation. Let $X$ and $Y$ be normed linear spaces, $X^*$ and $Y^*$ be
their dual spaces. Given  a~continuous linear operator $A\colon X\to Y$, $A^*$ denotes
the adjoint operator of~$A$. We let $\langle x^*,x\rangle$ denote a~linear functional
$x^*\in X^*$ evaluated at an element $x\in X$. The dual $(\mathbb R^n)^*$ of~$\mathbb
R^n$ will be identified with the space of vector rows; $(\mathbb R^n)_+^*$~is the cone of
positive functionals on~$\mathbb R^n$ (that is, nonnegative vector rows).

Given a~bilinear mapping $B\colon X\times X\to Y$, we shall write $B[x_1,x_2]$
to evaluate the mapping~$B$ at an element  $(x_1,x_2)$.

If  $(\widehat \xi ,\widehat x ,\widehat u )$ is an admissible point for system \eqref{1}, then
for the derivatives\footnote{Throughout, by derivatives we shall mean the Fr\'echet derivatives.}
at this point we shall frequently use for brevity the notation: $\widehat
F'=F'(\widehat \xi ,\widehat x ,\widehat u )$, $\widehat f'=f'(\widehat \xi ,\widehat x )$, $\widehat g'=g'(\widehat \xi ,\widehat x )$, and similarly, for the
partial derivatives $\widehat F_x=F_x(\widehat \xi ,\widehat x ,\widehat u )$, $\widehat
f_\xi=f_\xi(\widehat \xi ,\widehat x )$, and so~on.

For the second derivatives of mappings $F$, $f$ and $g$ (which are identified with the corresponding
continuous symmetric bilinear forms), we shall write
$\widehat F''=F''(\widehat \xi ,\widehat x ,\widehat u )$, $\widehat f''=f''(\widehat \xi ,\widehat x )$, $\widehat g''=g''(\widehat \xi ,\widehat x )$.

Let $(\widehat \xi ,\widehat x ,\widehat u )$ be an admissible point for system \eqref{1}. Given any
$q=(\zeta,h,v)\in  \mathbb R^n\times X\times Z$ (assuming that the corresponding derivatives
exist) we consider the system of equations with respect to the variables $y^*\in
Y^*$, $\lambda_1\in(\mathbb R^{m_1})^*_+$ and $\lambda_2\in(\mathbb R^{m_2})^*$:
\begin{equation}\label{reg}
\begin{cases}
\widehat F_\xi^*y^*+ \widehat f_\xi^*\lambda_1+\widehat g_\xi^*\lambda_2=0,\\[7pt]
\widehat F_x^*y^*+ \widehat f_x^*\lambda_1+\widehat g_x^*\lambda_2=0,\\[6pt]
\min\limits_{u \in \mathcal U}\langle y^*, F(\widehat \xi ,\widehat x ,u)\rangle=\langle y^*,F(\widehat \xi ,\widehat x ,\widehat u )\rangle=0,\\[10pt]
\langle\lambda_1,f(\widehat \xi ,\widehat x )\rangle=0,\\[6pt]
\langle y^*,
\widehat F''[q,q]\rangle+\langle\lambda_1,\widehat f''[(\zeta,h),(\zeta,h)]\rangle\\[6pt]
+\langle\lambda_2,\widehat g''[(\zeta,h),(\zeta,h)]\rangle\ge0.\\[6pt]
\end{cases}
\end{equation}

We let $\Lambda(\widehat \xi ,\widehat x ,\widehat u ,q)$ denote the set of triples $(y^*,\lambda_1,\lambda_2)\in
Y^*\times(\mathbb R^{m_1})^*_+\times(\mathbb R^{m_2})^*$ satisfying all the relations in~\eqref{reg} and such that
$|\lambda_1|+|\lambda_2|\ne0$.

As was already mentioned, system \eqref{1} is an abstract model of a~control
dynamical system in an optimal control problem.
The assumptions that follow can be looked upon as abstract
variants of the assumptions and properties that hold in a~standard
optimal control problem (for more detail, see the next section).

{\bf Basic Assumptions}:
\begin{itemize}
\item[1)] $X$, $Y$ and $Z$ are Banach spaces.
\item[2)] If $(\widehat \xi ,\widehat x ,\widehat u )$ is an admissible point for system \eqref{1}, then there exists
a~neighbourhood $(\widehat \xi ,\widehat x ,\widehat u )$ in which the mapping~$F$ has a~continuous second derivative
and the
mappings $f$ and $g$ have continuous second derivatives in the projection of this
neighbourhood onto $\mathbb R^n\times X$. The operator $F_x(\widehat \xi ,\widehat x ,\widehat u )$ is invertible.
\item[3)] For any $k\in\mathbb N$, $\varepsilon>0$,
$\overline\alpha=(\alpha_1,\ldots,\alpha_k)^T\in\Sigma^k=\{\,\overline\alpha=(\alpha_1,\ldots,\alpha_k)^T\in\mathbb
R^k_+ : \sum_{i=1}^k\alpha_i<1\,\}$ and  $\overline u=(u_0,u_1,\ldots,u_k)\in\mathcal U^{k+1}$
there exists an element $M_\varepsilon(\overline\alpha,\overline u)\in\mathcal U$ such that the mapping
$\overline\alpha\mapsto M_\varepsilon(\overline\alpha,\overline u)$ is continuous on~$\Sigma^k$, and if
$(\widehat \xi ,\widehat x ,\widehat{\overline u})\in \mathbb R^n\times X\times\mathcal U^{k+1}$, then there exists
a~neighbourhood~$U$ of the point $(\widehat \xi ,\widehat x ,\widehat{\overline u})$ such that $$
\|F(\xi,x,M_\varepsilon(\overline\alpha,\overline
u))-\sum_{i=0}^k\alpha_iF(\xi,x,u_i)\|_Y<\varepsilon
$$
and
$$
\|F_x(\xi,x,M_\varepsilon(\overline\alpha,\overline
u))-\sum_{i=0}^k\alpha_iF_x(\xi,x,u_i)\|<\varepsilon
$$
for all $(\xi,x,\overline u)\in U$ and $\overline\alpha\in\Sigma^k$.
\end{itemize}

Condition $3)$ means in particular that the closures of the ranges of the mappings
$u\mapsto F(\xi,x,u)$ and $u\mapsto F_x(\xi,x,u)$ are convex sets. This condition always
holds in an optimal control problem, where $F$~is an integral operator corresponding to
the differential constraint. The quantity $M_\varepsilon(\overline\alpha,\overline u)$
will be called the {\it mix of the controls} $u_0,u_1,\ldots,u_k$. This concept was first
introduced by Tikhomirov~\cite{T} (see also \cite{AMT}). Using mix, one can reach in
a~`regular' way any point lying in the closure of the range of any of the above mappings.

Let $(\widehat \xi ,\widehat x ,\widehat u )$ be an admissible point for system \eqref{1}. We define the set
(assuming that the corresponding derivatives exist)
\begin{multline}\label{k}
K(\widehat \xi ,\widehat x ,\widehat u )=\{\,q=(\zeta,h,v)\in\mathbb R^n\times  X\times Z : \widehat F'q=0,\quad
\widehat f'[\zeta,h]\le0,\\ \widehat g'[\zeta,h]=0\,\},
\end{multline}
where $\widehat f'[\zeta,h]$ and $\widehat g'[\zeta,h]$ are linear operators
$\widehat f'$ and $\widehat g'$ evaluated at an element $(\zeta,h)$.

The main result of the present paper is the following

\begin{theorem}\label{T1}
Given an admissible point $(\widehat \xi ,\widehat x ,\widehat u )$   for the control system \eqref{1},
assume that the Basic Assumptions are satisfied and there exists
$q=(\zeta,h,v)\in K(\widehat \xi ,\widehat x ,\widehat u )$ such that $\Lambda(\widehat \xi ,\widehat x ,\widehat u ,q)=\emptyset$. Then
system \eqref{1} is locally controllable with respect to the point $(\widehat \xi ,\widehat x ,\widehat u )$.

Moreover, there exists a~constant $\kappa_0>0$ such that
$\|x_y-\widehat x \|_X+|\xi_y-\widehat \xi \,|\le \kappa_0|y|^{1/2}$
for the variables $y$, $x_y$ and
$\xi_y$ from the definition of the controllability of system~\eqref{1}.
\end{theorem}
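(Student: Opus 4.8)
\medskip
\noindent\textbf{Sketch of the intended proof.}
The plan is to eliminate the phase variable, turn local controllability into a covering property of a map with finite-dimensional range, and then apply a special inverse theorem tailored to $2$-regular maps (a statement of independent interest that I would formulate and establish separately, as announced in the Introduction). First I would use Basic Assumption~2: since $\widehat F_x$ is invertible and $F$ is twice continuously differentiable near $(\widehat \xi ,\widehat x ,\widehat u )$, the equation $F(\xi,x,u)=0$ can be solved for $x$ — locally in $(\xi,u)$ by the implicit function theorem, and, when the control is replaced by a mix $M_\varepsilon(\overline\alpha,\overline u)$ with $u_0=\widehat u $, with the help of the uniform estimates on $F$ and $F_x$ in Basic Assumption~3 followed by a Newton-type correction that again uses the invertibility of $\widehat F_x$. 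One obtains a solution operator $x=x(\xi,u)$, Lipschitz near $(\widehat \xi ,\widehat u )$, with $x(\widehat \xi ,\widehat u )=\widehat x $. Substituting and introducing a slack variable $s\in\mathbb R^{m_1}_+$, Definition~\ref{d1} follows as soon as the map $(\xi,u,s)\mapsto\bigl(f(\xi,x(\xi,u))+s,\,g(\xi,x(\xi,u))\bigr)$ attains every value $y=(y_1,y_2)$ near the origin at a point with $|\xi-\widehat \xi \,|=O(|y|^{1/2})$; the corresponding $x_y=x(\xi_y,u_y)$ then inherits $\|x_y-\widehat x \|_X+|\xi_y-\widehat \xi \,|\le\kappa_0|y|^{1/2}$ from the Lipschitz property of the solution operator, while $u_y\in\mathcal U$ because $\widehat u \in\operatorname{int}\mathcal U$. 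A crucial point is that, thanks to the mix, the control enters this reduced problem not only through the single linear operator $\widehat F_u$ but through the whole closed convex cone generated by $\{\,F(\widehat \xi ,\widehat x ,u)-F(\widehat \xi ,\widehat x ,\widehat u ):u\in\mathcal U\,\}$ — this is what lets the argument proceed even when the linear approximation is not completely controllable.

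\emph{The critical direction and $2$-regularity.} Under the reduction just described, the element $q=(\zeta,h,v)\in K(\widehat \xi ,\widehat x ,\widehat u )$ becomes a zero of the linearization of the reduced map that is compatible with the cone $\mathbb R^{m_1}_+\times\{0\}$. The key claim is that the hypothesis $\Lambda(\widehat \xi ,\widehat x ,\widehat u ,q)=\emptyset$ is equivalent, by a Hahn--Banach separation argument (a theorem of the alternative that uses exactly the convexity furnished by the mix), to the statement that the convex cone built from the image of the linearization, from the values $\langle\,\cdot\,,\widehat F''[q,q]\rangle+\langle\,\cdot\,,\widehat f''[(\zeta,h),(\zeta,h)]\rangle+\langle\,\cdot\,,\widehat g''[(\zeta,h),(\zeta,h)]\rangle$ of the second derivatives along $q$, from the convexified reachable set of $F$-values, and from the slack cone, is the whole target space; in other words, that the reduced map is $2$-regular at $(\widehat \xi ,\widehat x ,\widehat u )$ in the direction $q$. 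The five groups of relations in~\eqref{reg} match the ingredients of this cone one by one: the first two express stationarity in $\xi$ and $x$; the third is the minimum principle, i.e. support of the convexified $F$-range at the origin, which is precisely the information the mix makes available; the fourth is complementary slackness for the inequality constraints; and the last fixes the sign of the second-order term.

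\emph{The special inverse theorem and conclusion.} Applying the special inverse theorem to the $2$-regular reduced map along $q$ yields, for every sufficiently small $y$, a preimage $(\xi_y,u_y,s_y)$ lying in a parabolic neighbourhood of the reference point, with $|\xi_y-\widehat \xi \,|+\|u_y-\widehat u \|_Z+|s_y|\le C|y|^{1/2}$, which is the source of the exponent $1/2$. Setting $x_y=x(\xi_y,u_y)$ then gives $F(\xi_y,x_y,u_y)=0$, $f(\xi_y,x_y)\le y_1$ (the slack $s_y\ge0$ is used here), $g(\xi_y,x_y)=y_2$, $u_y\in\mathcal U$, together with $\|x_y-\widehat x \|_X+|\xi_y-\widehat \xi \,|\le\kappa_0|y|^{1/2}$, which is exactly the assertion of the theorem.

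\emph{Main obstacle.} The hardest part is the middle step: showing that the absence of a nontrivial second-order multiplier triple is precisely the non-degeneracy hypothesis required by the special inverse theorem. This forces one to track simultaneously the cone $\mathbb R^{m_1}_+$ and complementary slackness, the minimum-principle condition on the control (it is here that the mix of Basic Assumption~3 is indispensable, since without it the first-order reachable set could be a proper subspace and no multiplier-free direction would exist), and the sign of the quadratic term $\langle y^*,\widehat F''[q,q]\rangle+\dots$, and then to propagate the $\varepsilon$-errors produced by the mixes through the whole construction, closing them off at the end by means of the invertibility of $\widehat F_x$.
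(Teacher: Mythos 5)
Your plan is essentially the paper's own proof: the implicit-function reduction of $F=0$ with the mix-parametrized convexification, the slack variable for the inequality constraints, the Hahn--Banach separation argument showing that $\Lambda(\widehat \xi ,\widehat x ,\widehat u ,q)=\emptyset$ forces the second-order covering condition (the paper's Proposition~\ref{P1}), and a special inverse theorem for $2$-regular maps along $q$ yielding preimages within $O(|y|^{1/2})$, with the mix-induced $\varepsilon$-errors absorbed because that inverse theorem is stable under uniform perturbations of the map (the paper's Theorem~\ref{T3} applied to $\Phi_\varepsilon\in V_y$). The only cosmetic difference is that the paper keeps the mix weights $\overline\alpha\in\mathbb R^k_+$ as explicit cone-constrained variables of the reduced map rather than folding them into the control, but this matches your stated intent.
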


Before proving the theorem we will prove two propositions and special inverse theorem
which guarantees the existence of inverse function with weaker assumptions than in the
classical situation. We first need some definitions.

For any $k\in\mathbb N$ and any tuple $\overline u=(u_1,\ldots u_k)\in \mathcal U^k$
we consider the mapping $\mathcal F\colon\mathbb R^n\times X\times\mathbb
R^k\times\mathcal U\to Y$ defined by
\begin{equation}\label{2f}
\mathcal F(\xi,x,\overline\alpha,u;\overline
u)=F(\xi,x,u)+\sum_{i=1}^k\alpha_i(F(\xi,x,u_i)-F(\xi,x,u)),
\end{equation}
where $\overline \alpha=(\alpha_1,\ldots,\alpha_k)^T$.

Let $(\widehat \xi ,\widehat x ,\widehat u )$ be an admissible point for system \eqref{1} and let the Basic Assumptions
hold. We have $\mathcal F(\widehat \xi ,\widehat x ,0,\widehat u ;\overline u)=F(\widehat x ,\widehat \xi ,\widehat u )=0$ and $\mathcal
F_{x}(\widehat \xi ,\widehat x ,0,\widehat u ;\overline u)=\widehat F_x$, and hence by the classical implicit function theorem,
there exists a~twice continuously differentiable mapping $(\xi,\overline\alpha,u)\mapsto
x(\xi,\overline\alpha,u;\overline u)$ from some neighbourhood of the point $(\widehat \xi ,0,\widehat u )$ such that
$\mathcal F(\xi,x(\xi,\overline\alpha,u;\overline u),\overline\alpha,u;\overline u)=0$ for all
$(\xi,\overline\alpha,u)$ from this neighbourhood.

Hence for all  such triples $(\xi,\overline\alpha,u)$ and all $r\in \mathbb R^{m_1}$
we have the continuously differentiable $\mathbb
R^{m_1+m_2}$-valued mapping~$\Phi$ defined by
\begin{equation}\label{31}
\Phi(\xi,\overline\alpha, r, u;\overline u)=(f(\xi,x(\xi,\overline\alpha, u;\overline u))+r, \
g(\xi,x(\xi,\overline\alpha, u;\overline u)))^T.
\end{equation}

We let $\Phi_{(\xi,\overline\alpha,r)}(\widehat \xi ,0,0,\widehat u ;\overline u)$ denote the partial derivative with respect to
$(\xi,\overline\alpha,r)$ of the mapping $(\xi,\overline\alpha,r,u)\mapsto \Phi(\xi,\overline\alpha,r,u;\overline
u)$ at a~point $(\widehat \xi ,0,0,\widehat u )$. We also denote by $\Phi_{ww}(\widehat \xi ,0,0,\widehat u ;\overline u)$, where $w=(\xi,u)$, the
second partial derivative with respect to~$w$ of the same mapping at the same point.

Given an element $a$ of a~linear space $X$, we let ${\rm conv}\,a$ denote the ray spanned by~$a$;
that is,  ${\rm conv}\,a=\{\,\beta a\in X : \beta\ge0\,\}$.

\begin{proposition}\label{P1}
Under the hypotheses of Theorem~\ref{T1} there exist $k\in\mathbb N$ and a~tuple
$\widehat{\overline u}=(\widehat u _1,\ldots,\widehat u _k)\in\mathcal U^{k}$ such that
\begin{multline}\label{re}
0\in\operatorname{int}\{\,\Phi_{(\xi,\overline\alpha,r)}(\widehat \xi ,0,0,\widehat u ;\widehat{\overline u})(\mathbb R^n\times\mathbb
R^k_+\times(\mathbb R^{m_1}_++f(\widehat \xi ,\widehat x )))\\+{\rm
conv}\,\Phi_{ww}(\widehat \xi ,0,0,\widehat u ;\widehat{\overline u})[(\zeta,v),(\zeta,v)]\,\}.
\end{multline}
\end{proposition}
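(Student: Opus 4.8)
The plan is to extract from the hypothesis $\Lambda(\widehat\xi,\widehat x,\widehat u,q)=\emptyset$ a finite-dimensional convexity/separation statement, and then realize the required directions using a finite mix of controls. First I would unwind what $\Lambda(\widehat\xi,\widehat x,\widehat u,q)=\emptyset$ says: there is \emph{no} nonzero triple $(y^*,\lambda_1,\lambda_2)$ with $\lambda_1\ge0$, complementary slackness $\langle\lambda_1,f(\widehat\xi,\widehat x)\rangle=0$, the adjoint equations $\widehat F_\xi^*y^*+\widehat f_\xi^*\lambda_1+\widehat g_\xi^*\lambda_2=0$, $\widehat F_x^*y^*+\widehat f_x^*\lambda_1+\widehat g_x^*\lambda_2=0$, the minimum condition $\min_{u\in\mathcal U}\langle y^*,F(\widehat\xi,\widehat x,u)\rangle=0$, and the second-order inequality $\langle y^*,\widehat F''[q,q]\rangle+\langle\lambda_1,\widehat f''[(\zeta,h),(\zeta,h)]\rangle+\langle\lambda_2,\widehat g''[(\zeta,h),(\zeta,h)]\rangle\ge0$. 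The goal is to read the condition \eqref{re} as the dual assertion: the convex set inside $\operatorname{int}\{\dots\}$ has $0$ in its interior precisely when no supporting functional exists, i.e.\ precisely when $\Lambda=\emptyset$.

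The key steps are as follows. (i) Compute the derivatives appearing in \eqref{re}. Using $\mathcal F(\xi,x(\xi,\overline\alpha,u;\overline u),\overline\alpha,u;\overline u)=0$ and $\widehat F_x^{-1}$ (which exists by Basic Assumption~2), differentiate implicitly to get $x_\xi=-\widehat F_x^{-1}\widehat F_\xi$, $x_u=-\widehat F_x^{-1}\widehat F_u$, and $x_{\alpha_i}=-\widehat F_x^{-1}(F(\widehat\xi,\widehat x,\widehat u_i)-F(\widehat\xi,\widehat x,\widehat u))=-\widehat F_x^{-1}F(\widehat\xi,\widehat x,\widehat u_i)$ (the last equality because $F(\widehat\xi,\widehat x,\widehat u)=0$). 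Then $\Phi_{(\xi,\overline\alpha,r)}(\widehat\xi,0,0,\widehat u;\widehat{\overline u})$ sends $(\zeta',\overline\beta,r)$ to $\big(\widehat f'[\zeta',x_\xi\zeta'+\sum\beta_i x_{\alpha_i}]+r,\ \widehat g'[\zeta',x_\xi\zeta'+\sum\beta_i x_{\alpha_i}]\big)$, and $\Phi_{ww}[(\zeta,v),(\zeta,v)]$ is a second-order expression in $(\zeta,v)$ built from $\widehat f''$, $\widehat g''$, $\widehat F''$, $\widehat F_x^{-1}$ and the $x$-derivatives evaluated at $q=(\zeta,h,v)$ (noting $h=-\widehat F_x^{-1}(\widehat F_\xi\zeta+\widehat F_u v)$ since $q\in K(\widehat\xi,\widehat x,\widehat u)$ means $\widehat F'q=0$). (ii) Observe that the set on the right of \eqref{re} is a subset of $\mathbb R^{m_1+m_2}$, hence its interior being nonempty / containing $0$ is equivalent, by a standard separation argument, to: there is no nonzero $\mu=(\lambda_1,\lambda_2)\in(\mathbb R^{m_1})^*\times(\mathbb R^{m_2})^*$ with $\langle\mu,\cdot\rangle\le0$ on that whole set. (iii) Translate that ``no separating $\mu$'' statement back: $\langle\mu,\cdot\rangle\le0$ on $\Phi_{(\xi,\overline\alpha,r)}(\mathbb R^n\times\mathbb R^k_+\times(\mathbb R^{m_1}_++f(\widehat\xi,\widehat x)))$ forces (ranging over $\zeta'\in\mathbb R^n$, $\beta_i\ge0$, $r\in\mathbb R^{m_1}_++f(\widehat\xi,\widehat x)$) exactly the adjoint equations and sign/complementarity conditions on $(\lambda_1,\lambda_2)$ after setting $y^*=-(\widehat F_x^{-1})^*(\widehat f_x^*\lambda_1+\widehat g_x^*\lambda_2)$ — here the $\beta_i\ge0$ directions give, for each $i$, $\langle y^*,F(\widehat\xi,\widehat x,\widehat u_i)\rangle\ge0$ — while $\langle\mu,\cdot\rangle\le0$ on the ray $\operatorname{conv}\Phi_{ww}[(\zeta,v),(\zeta,v)]$ gives the second-order inequality in \eqref{reg}. (iv) The only gap between ``$\langle y^*,F(\widehat\xi,\widehat x,\widehat u_i)\rangle\ge0$ for the finitely many $u_i$'' and the true minimum condition ``$\langle y^*,F(\widehat\xi,\widehat x,u)\rangle\ge0$ for \emph{all} $u\in\mathcal U$'' is closed by choosing $k$ and $\widehat u_1,\dots,\widehat u_k$ well: a compactness/approximation argument using Basic Assumption~3 (existence of mixes) shows that finitely many controls suffice to detect positivity of $\langle y^*,F(\widehat\xi,\widehat x,\cdot)\rangle$ over $\mathcal U$ up to any $\varepsilon$, uniformly over the (finite-dimensional, hence compact-sliceable) set of candidate multipliers $\mu$ with $|\mu|=1$. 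This is where the role of the mix is essential, since the closure of $\{F(\widehat\xi,\widehat x,u):u\in\mathcal U\}$ is convex by Assumption~3.

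The main obstacle I expect is step (iv): producing the \emph{finite} tuple $\widehat{\overline u}$ uniformly. One must argue by contradiction — if for every $k$ and every tuple the interior condition \eqref{re} fails, then for each $k$ there is a unit multiplier $\mu_k$ separating, the corresponding $y^*_k$ are bounded (via $\widehat F_x^{-1}$), and one passes to a limit in the finite-dimensional $\mu$-component; the limiting $(y^*,\lambda_1,\lambda_2)$ must then satisfy \emph{all} of \eqref{reg}, including the full minimum condition, by a diagonal argument exhausting a countable dense (in the mix sense) family of controls and invoking Assumption~3 to control $\|F(\xi,x,M_\varepsilon(\overline\alpha,\overline u))-\sum\alpha_i F(\xi,x,u_i)\|$. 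That limiting triple is a nonzero element of $\Lambda(\widehat\xi,\widehat x,\widehat u,q)$, contradicting the hypothesis. The remaining steps (i)--(iii) are essentially the finite-dimensional Farkas/Minkowski duality plus bookkeeping of chain-rule derivatives, and are routine once the notation is set up.
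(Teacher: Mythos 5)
Your steps (i)--(iii) --- the chain-rule computation of $\Phi_{(\xi,\overline\alpha,r)}$ and $\Phi_{ww}$, the reduction of the condition $0\in\operatorname{int}\{\cdots\}$ in \eqref{re} to the non-existence of a separating functional (the set there is convex, being a linear image of a convex set plus a ray, so finite-dimensional separation applies), and the translation of a separator $(\lambda_1,\lambda_2)$ into the relations of \eqref{reg} via $y^*=-(\widehat F_x^{-1})^*(\widehat f_x^*\lambda_1+\widehat g_x^*\lambda_2)$ --- agree with the paper's argument. The genuine gap is in your step (iv), which is the crux. Your mechanism for producing one multiplier satisfying the minimum condition over \emph{all} of $\mathcal U$ is a diagonal limit over ``a countable dense (in the mix sense) family of controls'' together with a uniform $\varepsilon$-approximation claim based on Basic Assumption~3. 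Neither ingredient is available: $\mathcal U$ is an arbitrary subset of a normed space, no separability of $\mathcal U$ or of $F(\widehat \xi ,\widehat x ,\mathcal U)$ is assumed, so a countable family that is dense in any useful sense need not exist; and the assertion that finitely many controls detect positivity of $u\mapsto\langle y^*,F(\widehat \xi ,\widehat x ,u)\rangle$ over $\mathcal U$ up to $\varepsilon$, uniformly over unit multipliers, is unproved and is essentially the statement you are trying to establish. Moreover, Basic Assumption~3 (mixes) plays no role at this stage --- the set in \eqref{re} is convex without it; mixes enter only later, when the convex weights $\overline\alpha$ must be realized by genuine admissible controls (Proposition~\ref{P11} and the proof of Theorem~\ref{T1}).

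What closes the gap in the paper is a purely compactness-based observation that your plan is missing: argue by contradiction over \emph{all} finite tuples simultaneously. For each tuple $\overline u$ let $\mathcal A(\overline u)$ denote the set of unit separators; it is a closed subset of the unit sphere of $(\mathbb R^{m_1+m_2})^*$. If $\overline u_1,\dots,\overline u_s$ are tuples and $\widetilde{\overline u}$ is their concatenation, then any separator for $\widetilde{\overline u}$ also separates for each $\overline u_j$: simply set equal to zero the coordinates $\alpha_i$ corresponding to the controls absent from $\overline u_j$. Hence the family $\{\mathcal A(\overline u)\}$ has the finite intersection property, and compactness of the sphere yields a single multiplier $(\lambda_1,\lambda_2)$ valid for every tuple at once; taking singleton tuples $\overline u=u$, $u\in\mathcal U$, then gives the full minimum condition with no density or approximation argument at all. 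With this replacement of your step (iv), your steps (i)--(iii) produce a nonzero element of $\Lambda(\widehat \xi ,\widehat x ,\widehat u ,q)$, contradicting the hypothesis, as required.
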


\begin{proof}[Proof]
Assume on the contrary that the inclusion \eqref{re} does not hold
for any $k\in\mathbb N$ and any tuple $\overline
u=(u_1,\ldots,u_k)\in\mathcal U^{k}$ .
Then by the separation theorem there exists a~nonzero vector $\overline\lambda(\overline u)\in(\mathbb
R^{m_1+m_2})^*$ such that
\begin{multline}\label{2}
\langle \overline\lambda(\overline u), \Phi_{(\xi,\overline\alpha,r)}(\widehat \xi ,0,0,\widehat u ; \overline
u)[\xi,\overline\alpha,r]\\+\beta\,\Phi_{ww}(\widehat \xi ,0,0,\widehat u ; \overline u)[(z,v),(z,v)]\rangle\ge0
\end{multline}
for all $\xi\in\mathbb R^n$, $\overline\alpha\in\mathbb R^k_+$, $r\in\mathbb
R^{m_1}_++f(\widehat \xi ,\widehat x )$ and $\beta\ge0$.

By the implicit function there exist the partial derivatives $\widehat x _{\xi}(\overline u)$ and
$\widehat x _{\alpha_i}(\overline u)$ with respect to~$\xi$ and $\alpha_i$, $1\le i\le k$,
respectively, of the mapping
$(\xi,\overline\alpha,u)\mapsto x(\xi,\overline\alpha,u;\overline u)$ at the point $(\widehat \xi ,0,\widehat u )$ satisfying the
relations
\begin{equation}\label{g8}
\widehat F_x\widehat x _{\alpha_i}(\overline u)\alpha_i+\alpha_iF(\widehat \xi ,\widehat x ,u_i)=0,\quad i=1,\ldots,k,
\end{equation}
for all $\alpha_i\in\mathbb R$ and
\begin{equation}\label{g9}
\widehat F_x\widehat x _{\xi}(\overline u)\xi+\widehat F_{\xi}\xi=0
\end{equation}
for all $\xi\in\mathbb R^n$.

This implies that $\widehat x _{\alpha_i}(\overline u)$ depends only on the $i$th component of the vector
$\overline u$, while $\widehat x _{\xi}(\overline u)$ is independent of~$\overline u$. Hence in what follows we shall
write $\widehat x _{\alpha_i}(u_i)$ and $\widehat x _{\xi}$ in place of $\widehat x _{\alpha_i}(\overline u)$ and $\widehat x _{\xi}(\overline u)$, respectively.

Writing the vector $\overline \lambda(\overline u)$ in the form $\overline\lambda(\overline u)=(\lambda_1(\overline
u),\lambda_2(\overline u))$, where $\lambda_i(\overline u)\in(\mathbb R^{m_i})^*$, $i=1,2$, inequality
\eqref{2} assumes the form
\begin{multline}\label{3}
\langle \lambda_1(\overline u),\widehat f_x \widehat x _\xi\xi+\widehat f_\xi \xi+\widehat
f_x\sum_{i=1}^k\widehat x _{\alpha_i}(u_i)\alpha_i +r+f(\widehat \xi ,\widehat x )\rangle\\+\langle\lambda_2(\overline
u),\widehat g_x \widehat x _\xi\xi+\widehat g_\xi \xi+\widehat
g_x\sum_{i=1}^k\widehat x _{\alpha_i}(u_i)\alpha_i\rangle\\+\langle\overline\lambda(\overline
u),\beta\,\Phi_{ww}(\widehat \xi ,0,0,\widehat u ;\overline u)[(\zeta,v), (\zeta,v)]\rangle\ge0
\end{multline}
for all $\xi\in\mathbb R^n$, $\alpha_i\ge0$, $i=1,\ldots,k$, $r\in\mathbb R^{m_1}_+$ and
$\beta\ge0$ (the chain rule for differentiation being useful).

We shall assume that  $|\overline\lambda(\overline u)|=1$. We let
$\mathcal A(\overline u)$
denote the set of all such $\overline\lambda(\overline u)$ satisfying~\eqref{3}. It is clearly seen that
$\mathcal A(\overline u)$ is a~closed subset of the unit sphere in $(\mathbb R^{m_1+m_2})^*$.
Thus with each $k\in\mathbb N$ and each tuple $\overline u=(u_1,\ldots,u_k)$
one may associate a~closed subset of this compact set. We claim that the family $\mathcal
A$ of all such subsets has the finite intersection property.

Let $\overline u_1,\ldots,\overline u_s$  be an arbitrary finite family of tuples $\overline
u_i=(u_{i1},\ldots, u_{ik_i})$, $i=1,\ldots,s$. We claim that
$\cap_{i=1}^s\mathcal A(\overline u_i)\ne\emptyset$. Indeed, let $\widetilde{\overline u}$~be a~tuple
consisting of the union of all such families. The tuple~$\widetilde{\overline u}$ satisfies the inequality
similar to~\eqref{3} with
$\overline\lambda(\widetilde{\overline u})$ and with $k$~replaced by the cardinality of the tuple~$\widetilde{\overline u}$.
Let $1\le j\le s$. Setting in this analogue of inequality \eqref{3}\enskip $\alpha_i=0$
for such indexes~$i$ for which $u_i$ does not lie in the tuple $\overline u_j$, we see that
$\overline\lambda(\widetilde{\overline u})\in\mathcal A(\overline u_j)$ and hence $\overline\lambda(\widetilde{\overline u})
\in\cap_{j=1}^s\mathcal A(\overline u_j)$.

So, the family $\mathcal A$ of closed subsets
of the compact set has the finite intersection property and hence
there exists a~vector $\overline\lambda=(\lambda_1,\lambda_2)$,
$|\overline\lambda|=1$ for which~\eqref{3} holds for any tuple~$\overline
u$. In particular, \eqref{3}~holds for singletons  $\overline u=u_1$.
We shall write $u$~in place of~$u_1$ and since in this case $\overline\alpha=\alpha_1$,
we write~$\alpha$ in place of~$\alpha_1$.

Thus by \eqref{3} all such tuples satisfy the relation
\begin{multline}\label{4}
\langle \lambda_1,\widehat f_x \widehat x _\xi\xi+\widehat f_\xi \xi+\widehat
f_x\widehat x _{\alpha}(u)\alpha +r+f(\widehat \xi ,\widehat x )\rangle+\langle\lambda_2,\widehat g_x \widehat x _\xi\xi+\widehat
g_\xi \xi\\+\widehat
g_x\widehat x _{\alpha}(u)\alpha\rangle+\langle\overline\lambda,\beta\,\Phi_{ww}(\widehat \xi ,0,0,\widehat u ;u)[(\zeta,v),
(\zeta,v)]\rangle\ge0
\end{multline}
for all $u\in\mathcal U$, $\xi\in\mathbb R^n$, $\alpha\ge0$, $r\in\mathbb R^{m_1}_+$ and
$\beta\ge0$.

Setting in \eqref{4}\enskip $\xi=0$, $\alpha=\beta=0$ and $r=\widetilde r-f(\widehat \xi ,\widehat x )$, where
$\widetilde r\in\mathbb R^{m_1}_+$, we see that
$\langle \lambda_1,\widetilde r\rangle\ge0$ for all $\widetilde r\in\mathbb R^{m_1}_+$, and hence
$\lambda_1\in (\mathbb R^{m_1})^*_+$.

Assume that $\xi=0$, $\alpha=\beta=0$ and $r=0$ in \eqref{4}, hence  $\langle
\lambda_1,f(\widehat \xi ,\widehat x )\rangle\ge0$. But since $\lambda_1\ge0$ and $f(\widehat \xi ,\widehat x )\le0$, we have  $\langle
\lambda_1,f(\widehat \xi ,\widehat x )\rangle\le0$ and therefore
\begin{equation}\label{6}
\langle \lambda_1,f(\widehat \xi ,\widehat x )\rangle=0.
\end{equation}
We set $y^*=-(\widehat F_x^{-1})^*(\widehat f_x^*\lambda_1+\widehat g_x^*\lambda_2)$. Then
\begin{equation}\label{7} \widehat F_x^* y^*+\widehat f_x^*\lambda_1+\widehat
g_x^*\lambda_2=0.
\end{equation}

If in \eqref{4} $\xi=0$, $\beta=0$ and  $r=-f(\widehat x ,\widehat \xi \,)$, then
\begin{equation}\label{8}
\langle \lambda_1, \widehat f_x\widehat x _{\alpha}(u)\alpha\rangle+\langle \lambda_2,\widehat
g_x\widehat x _{\alpha}(u)\alpha\rangle\ge0
\end{equation}
for all  $u\in\mathcal U$ and $\alpha\ge0$.

Applying \eqref{7} to  $\widehat x _{\alpha}(u)\alpha$, we get
\begin{equation*}
\langle y^*, \widehat F_x \widehat x _{\alpha}(u)\alpha\rangle+\langle \lambda_1, \widehat
f_x\widehat x _{\alpha}(u)\alpha\rangle+\langle \lambda_2, \widehat g_x\widehat x _{\alpha}(u)\alpha\rangle=0.
\end{equation*}
Hence, using \eqref{g8} with $\alpha_i=\alpha$, $\overline u=u_i=u$ and~\eqref{8} we have,
for all $u\in\mathcal U$ and $\alpha\ge0$,
 \begin{equation*}
-\langle y^*, \widehat F_x\widehat x _{\alpha}(u)\alpha\rangle=\langle y^*,\alpha F(\widehat \xi ,\widehat x ,u)\rangle\ge0=\langle
y^*,F(\widehat \xi ,\widehat x ,\widehat u )\rangle
\end{equation*}
and therefore,
\begin{equation}\label{9}
\min_{u \in \mathcal U}\langle y^*, F(\widehat \xi ,\widehat x ,u)\rangle=\langle y^*,F(\widehat \xi ,\widehat x ,\widehat u )\rangle.
\end{equation}

Now if $\alpha=\beta=0$ and $r=-f(\widehat \xi ,\widehat x )$ in~\eqref{4}, then, since
$\xi$~is arbitrary,
\begin{equation*}
\langle\lambda_1, \widehat f_x \widehat x _\xi\xi+\widehat {f_\xi}\xi\rangle+\langle\lambda_2,\widehat g_x
\widehat x _\xi\xi+\widehat g_\xi\xi\rangle=0.
\end{equation*}
Hence, using \eqref{7}, as applied to $\widehat x _\xi\xi$, we get
\begin{equation*}
-\langle y^*,\widehat F_x\widehat x _\xi\xi\rangle+\langle\lambda_1,\widehat f_\xi\xi\rangle+\langle\lambda_2,\widehat
g_\xi\xi\rangle=0.
\end{equation*}
In combination with \eqref{g9} this means that
\begin{equation*}
\langle y^*,\widehat F_\xi \xi\rangle+\langle\lambda_1,\widehat f_\xi\xi\rangle+\langle\lambda_2,\widehat
g_\xi\xi\rangle=0,
\end{equation*}
and so
\begin{equation}\label{g13a}
\widehat F_\xi^*y^*+\widehat f_\xi^*\lambda_1+\widehat g_\xi^*\lambda_2=0.
\end{equation}

From \eqref{g13a}, \eqref{7}, \eqref{9} and \eqref{6} it follows that the triple
$(y^*,\lambda_1,\lambda_2)\in Y^*\times(\mathbb R^{m_1})^*_+\times(\mathbb R^{m_2})^*$ for
which  $|\lambda_1|+|\lambda_2|\ne0$ satisfies the first four relations in~\eqref{reg}.
We claim that it also satisfies the fifth relation in~\eqref{reg}. To this aim we shall
transform the second term in~\eqref{4}. However,
we first need a~few remarks.

We recall that  $w=(\xi,u)$. As in the above, we briefly denote the derivatives of~$F$ at a~point
$(\widehat \xi ,\widehat x ,\widehat u )$ by $\widehat F_{w}=F_{w}(\widehat \xi ,\widehat x ,\widehat u )$,
$\widehat F_{xx}=F_{xx}(\widehat \xi ,\widehat x ,\widehat u )$, $\widehat F_{xw}=F_{xw}(\widehat \xi ,\widehat x ,\widehat u )$ and so forth.

We denote by $\widehat x _{w}$
the partial derivative with respect to~$w$ of the mapping
$(\xi,\overline\alpha,u)\mapsto x(\xi,\overline\alpha,u;\overline u)$ at a~point $(\widehat \xi ,0,\widehat u )$.
By the rule for differentiation of
implicit functions, $\widehat x _{w}=-\mathcal F_x(\widehat \xi ,\widehat x ,0,\widehat u ;\overline u)\mathcal F_w(\widehat \xi ,\widehat x ,0,\widehat u ;\overline
u)=-\widehat F^{-1}_x\widehat F_w$.

By the hypothesis $(\zeta,h,v)\in K(\widehat \xi ,\widehat x ,\widehat u )$, and hence $\widehat F_xh+\widehat F_wp=0$, where
$p=(\zeta,v)$. Hence $h=-\widehat F^{-1}_x\widehat F_wp=\widehat x _wp$. Using this fact and
the well-known formula for the second derivative of an implicit function (see, for example,~\cite{Z}),
we have
\begin{multline*}
\widehat x _{ww}[p,p]=\widehat F_x^{-1}(((\widehat F_{xw}+\widehat F_{xx}\widehat x _{w})p)\widehat F_x^{-1}\widehat F_{w}p\\-
((\widehat F_{ww}+\widehat F_{wx}\widehat x _{w})p)p)=
\widehat F_x^{-1}(\widehat F_{xw}[p,\widehat F_x^{-1}\widehat F_{w}p]+\widehat F_{xx}[\widehat x _{w}p,\widehat F_x^{-1}\widehat F_{w}p]\\-
\widehat F_{ww}[p,p]-\widehat F_{wx}[\widehat x _{w}p,p])=-
\widehat F_x^{-1}(\widehat F_{xx}[h,h]+2\widehat F_{xw}[h,p]\\+\widehat F_{ww}[p,p])=-\widehat F_x^{-1}\widehat F''[q,q].
\end{multline*}

Further, direct (but routine) calculations show that
\begin{multline*}
\Phi_{ww}(\widehat \xi ,0,0,\widehat u ;\overline u)[p,p]=(\widehat f''[(\zeta,h),(\zeta,h)]+\widehat f_x\widehat x _{ww}[p,p],\\
\widehat g''[(\zeta,h),(\zeta,h)]+\widehat g_x\widehat x _{ww}[p,p]).
\end{multline*}
Substituting  here the above expression for $\widehat x _{ww}[p,p]$, it follows from \eqref{4}
with $\xi=0$, $\alpha=0$, $r=-f(\widehat \xi ,\widehat x )$ and $\beta=1$ that
\begin{multline*}
\langle\overline\lambda,\Phi_{ww}(\widehat \xi ,0,0,\widehat u ;\overline u)[p,p]\rangle=\langle\lambda_1,\widehat
f''[(\zeta,h),(\zeta,h)]\rangle\\+ \langle\lambda_2,\widehat g''[(\zeta,h),(\zeta,h)]\rangle -\langle
\widehat f_x^*\lambda_1+\widehat g_x^*\lambda_2,\widehat F_x^{-1}\widehat F''[q,q]\rangle\ge0.
\end{multline*}
Hence and from the definition of the functional $y^*$ it follows that the triple  $(y^*,\lambda_1,\lambda_2)$
also satisfies the fifth relation in~\eqref{reg}; that is,
$\Lambda(\widehat \xi ,\widehat x ,\widehat u ,q)\ne\emptyset$, contradicting the assumption.
\end{proof}

Recall that the mapping $\mathcal F$ is defined by \eqref{2f} and
$x(\xi,\overline\alpha,u;\overline u)$ is the solution of the equation $\mathcal
F(\xi,x,\overline\alpha,u;\overline u)=0$.

\begin{proposition}\label{P11} Let the assumptions of Theorem~\ref{T1} hold
and the tuple $\widehat{\overline u}=(\widehat u _1,\ldots,\widehat u _k)$ be from
Proposition \ref{P1}. There exist neighbourhoods $\wo_0(\wc\,)$, $\wo_0(0)$, $\wo_0(\wu)$
of the points $\wc$, $0\in\mathbb R^k$, $\wu$  and $\varepsilon_0>0$ such that, for all
$0<\varepsilon\le \varepsilon_0$, there exists a continuous mapping
$(\xi,\ov\alpha,u)\mapsto x_\varepsilon(\xi,\ov\alpha, u)$  from
$\wo_0(\wc\,)\times(\wo_0(0)\cap \mathbb R^k_+)\times\wo_0(\wu)$ into $\wo(\wx)$, for
which $F(\xi,x_\varepsilon(\xi,\ov\alpha, u),M_\varepsilon(\ov\alpha, (u,\wou)))=0$ and
\begin{equation}\label{t4}
\|x_\varepsilon(\xi,\ov\alpha,u)-x(\xi,\ov\alpha,u;\wou)\|_X<2\|\wF_x^{-1}\|\varepsilon
\end{equation}
for all $(\xi,\ov\alpha, u)\in \wo_0(\wc\,)\times(\wo_0(0)\cap \mathbb
R^k_+)\times\wo_0(\wu)$.
\end{proposition}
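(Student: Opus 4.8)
The plan is to construct $x_\varepsilon(\xi,\overline\alpha,u)$ by applying the classical implicit function theorem, or rather a uniform contraction-mapping argument, to the equation $F(\xi,x,M_\varepsilon(\overline\alpha,(u,\widehat{\overline u})))=0$, treating $(\xi,\overline\alpha,u,\varepsilon)$ as parameters and using $x(\xi,\overline\alpha,u;\widehat{\overline u})$ as the initial guess. The key point is that $x(\xi,\overline\alpha,u;\widehat{\overline u})$ already solves $\mathcal F(\xi,x,\overline\alpha,u;\widehat{\overline u})=0$, and by the Basic Assumption~3) the mix $M_\varepsilon(\overline\alpha,(u,\widehat{\overline u}))$ produces a value of $F$ that is within $\varepsilon$ of $\sum_i\alpha_i F(\cdot,u_i)$ — which is exactly how $\mathcal F$ was built — so $F(\xi,x(\xi,\overline\alpha,u;\widehat{\overline u}),M_\varepsilon(\overline\alpha,(u,\widehat{\overline u})))$ is small, of order $\varepsilon$, near the base point. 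First I would fix notation: write $\Psi_\varepsilon(\xi,\overline\alpha,u,x)=F(\xi,x,M_\varepsilon(\overline\alpha,(u,\widehat{\overline u})))$ and note $\Psi_\varepsilon$ need not be smooth in $\overline\alpha$ (the mix is only continuous), but it is $C^2$ in $x$ with $\Psi_{\varepsilon,x}$ close to $\widehat F_x$ uniformly, again by Assumption~3) (the second displayed inequality there controls $F_x(\cdot,M_\varepsilon)-\sum_i\alpha_i F_x(\cdot,u_i)$).

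The main steps are as follows. First, shrink $\mathcal O(\widehat x)$ and choose neighbourhoods $\mathcal O_0(\widehat\xi)$, $\mathcal O_0(0)$, $\mathcal O_0(\widehat u)$ and $\varepsilon_0>0$ so small that: (a) on the relevant set $F_x(\xi,x,u')$ is invertible with $\|F_x(\xi,x,u')^{-1}\|\le 2\|\widehat F_x^{-1}\|$ for every control $u'$ of the form $M_\varepsilon(\overline\alpha,(u,\widehat{\overline u}))$ — this uses the $\varepsilon$-closeness of $F_x(\cdot,M_\varepsilon)$ to the convex combination together with continuity of $F_x$ and the Banach lemma on invertibility; (b) the ball $\{x:\|x-x(\xi,\overline\alpha,u;\widehat{\overline u})\|<2\|\widehat F_x^{-1}\|\varepsilon\}$ stays inside $\mathcal O(\widehat x)$; (c) $\|\Psi_\varepsilon(\xi,\overline\alpha,u,x(\xi,\overline\alpha,u;\widehat{\overline u}))\|_Y<\varepsilon$, which follows from $\mathcal F(\xi,x(\cdot);\widehat{\overline u})=0$ and the first displayed inequality in Assumption~3) applied with $\overline u=(u,\widehat{\overline u})$, $\alpha_0=1-\sum\alpha_i$. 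Second, for fixed $(\xi,\overline\alpha,u)$ run the standard Newton/contraction iteration $x\mapsto x-F_x(\xi,x(\cdot);\widehat{\overline u})^{-1}\Psi_\varepsilon(\xi,\overline\alpha,u,x)$ (or the full Newton operator): the smallness of the residual from (c) and the Lipschitz estimate on $F_x$ near $\widehat x$ give, after possibly shrinking $\varepsilon_0$, a contraction on the closed ball of radius $2\|\widehat F_x^{-1}\|\varepsilon$, whose unique fixed point I name $x_\varepsilon(\xi,\overline\alpha,u)$; this immediately yields $F(\xi,x_\varepsilon,M_\varepsilon)=0$ and the estimate \eqref{t4}. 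Third, establish continuity of $(\xi,\overline\alpha,u)\mapsto x_\varepsilon(\xi,\overline\alpha,u)$: since $M_\varepsilon$ depends continuously on $\overline\alpha$ (Assumption~3)) and $F$, $x(\cdot;\widehat{\overline u})$, and the inversion $F_x^{-1}$ are continuous, the fixed point depends continuously on the parameters by the usual uniform-contraction argument (the contraction constant is bounded away from $1$ uniformly).

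The step I expect to be the main obstacle is Step~one(a)–(c) made fully uniform: one must be careful that the neighbourhood $U$ furnished by Assumption~3) depends on $\widehat{\overline u}$ and on the base point $(\widehat\xi,\widehat x,\widehat{\overline u})$ but \emph{not} on $\varepsilon$, so that a single choice of $\mathcal O_0(\widehat\xi)\times\mathcal O_0(0)\times\mathcal O_0(\widehat u)$ works for all $0<\varepsilon\le\varepsilon_0$ — this is exactly the form in which Assumption~3) is stated, so it goes through, but it is the delicate bookkeeping point. A secondary subtlety is that $x(\xi,\overline\alpha,u;\widehat{\overline u})$ itself is only defined on a neighbourhood of $(\widehat\xi,0,\widehat u)$, so $\mathcal O_0$'s must be chosen inside the domain of that implicit function; and one should note that $x_\varepsilon$ need not be differentiable in $\overline\alpha$ — only continuity is claimed and only continuity is needed downstream. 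Everything else — the Banach lemma, the contraction estimate, continuous dependence of fixed points — is routine.
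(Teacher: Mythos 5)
The paper gives no proof of Proposition~\ref{P11} at all: it is quoted as a particular case of Corollary~3 of \cite{AM}. So your contraction argument is a self-contained substitute rather than a parallel of anything in the text, and its analytic core is sound: with $x^0=x(\xi,\overline\alpha,u;\widehat{\overline u})$ as the starting point, the identity $\mathcal F(\xi,x^0,\overline\alpha,u;\widehat{\overline u})=0$ is exactly the vanishing of the convex combination $\sum_{i=0}^k\alpha_iF(\xi,x^0,u_i)$ with $u_0=u$, $\alpha_0=1-\sum_{i\ge1}\alpha_i$, so the first inequality of Assumption~3) makes the residual $F(\xi,x^0,M_\varepsilon(\overline\alpha,(u,\widehat{\overline u})))$ smaller than $\varepsilon$; the second inequality plus continuity of $\mathcal F_x$ keeps the frozen-operator Newton map a contraction with constant $\theta<\tfrac12$ on the ball of radius $2\|\widehat F_x^{-1}\|\varepsilon$ around $x^0$, giving the zero of $F(\xi,\cdot,M_\varepsilon)$, the bound \eqref{t4} (since $\tfrac{1}{1-\theta}\|\widehat F_x^{-1}\|\varepsilon<2\|\widehat F_x^{-1}\|\varepsilon$), and continuity of $x_\varepsilon$ by the uniform contraction principle. (Note in passing that differentiability of $x\mapsto F(\xi,x,M_\varepsilon(\cdot))$ at the mixed controls is not covered by Assumption~2), which is local at $(\widehat \xi ,\widehat x ,\widehat u )$; you are implicitly using that Assumption~3) presupposes the existence of $F_x$ at such points, as the paper itself does.)

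The genuine weak point is precisely the issue you flagged and then waved through. In Assumption~3) the existence of the neighbourhood $U$ is asserted \emph{inside} the scope of ``for any $\varepsilon>0$'', so on the literal reading $U$ may depend on $\varepsilon$, not only on $k$ and the base point $(\widehat \xi ,\widehat x ,(\widehat u ,\widehat u _1,\ldots,\widehat u _k))$. Your construction needs both $\varepsilon$-estimates to hold on one fixed neighbourhood for every $0<\varepsilon\le\varepsilon_0$; if $U=U(\varepsilon)$ is allowed to shrink as $\varepsilon\to0$, you only obtain $x_\varepsilon$ on $\varepsilon$-dependent domains, which is strictly weaker than the Proposition and insufficient for its use in the proof of Theorem~\ref{T1}, where $\Phi_\varepsilon$ must be defined on the fixed set $V$ and approximate $\Phi$ in $C(V,\mathbb R^{m_1+m_2})$ as $\varepsilon\to0$. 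Your justification ``this is exactly the form in which Assumption~3) is stated'' is therefore not supported by the quantifier order; you should either say explicitly that you adopt the reading in which $U$ can be chosen independently of $\varepsilon$ (this is what the concrete sliding-mode mix of \cite{AM} actually provides, and is evidently the intended meaning), or supply an argument extracting that uniformity. With that caveat made explicit, the rest of your plan goes through.
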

This  Proposition we do not prove since it is a~particular case of  more general
assertion proved in \cite{AM} (see Corollary 3).

Before the formulation of the inverse theorem, we introduce some definition.

Let $V$ be an open subset of a normed linear space. We let $C(V,\mathbb R^m)$ denote the
space of all bounded continuous mappings $G$ from $V$ into~$\mathbb R^m$ with the norm
$\|G\|=\sup_{w\in V}|G(w)|$.

\begin{theorem}\label{T3}
Let $X$ be a normed space, $K$ be a~convex cone in~$X$, $V$ be a~neighbourhood of a~point
$\widehat w\in K$, a~mapping $\widehat G\colon V\to\mathbb R^m$ is continuous and bounded
on~$V$ and is twice differentiable at $\widehat w$, $q\in{\rm Ker}\,\widehat G'(\widehat
w)\cap K$, $\|q\|=1$ and
\begin{equation}\label{l}
0\in{\rm int}\{\,\widehat G'(\widehat w)(K-\widehat w)+{\rm conv}\,\widehat G''(\widehat
w)[q,q]\,\}.
\end{equation}

Then there exist a~neighbourhood $V_1$ of the point $\widehat G(\widehat w)$ and
a~constant $\kappa>0$ such that, for any $y\in V_1$, there exists a~neighbourhood $V_y$
of the mapping $\widehat G\in C(V,\mathbb R^m)$ with the property that, for any $G\in
V_y$, there exists a~point $w_G(y)\in V\cap K$ for which
\begin{equation}\label{l1}
G(w_G(y))=y,\qquad \|w_G(y)-\widehat w\|_X\le \kappa|y-\widehat G(\widehat w)|^{1/2}.
\end{equation}
\end{theorem}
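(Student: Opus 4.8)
The plan is to reduce the claim to a quantitative (``modulus-type'') version of the Lyusternik--Graves theorem adapted to the cone $K$ and to the second-order term, and to set things up so that the same construction works uniformly for all $G$ in a $C(V,\mathbb R^m)$-neighbourhood of $\widehat G$. First I would normalise: replacing $\widehat G$ by $\widehat G-\widehat G(\widehat w)$ we may assume $\widehat G(\widehat w)=0$, and by translating $V$ we think of $\widehat w$ as a base point of the cone $K$. The hypothesis \eqref{l} says that the convex set $\widehat G'(\widehat w)(K-\widehat w)+\mathrm{conv}\,\widehat G''(\widehat w)[q,q]$ contains a ball $\rho B$ around the origin in $\mathbb R^m$. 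Fixing finitely many points whose convex hull with $0$ already contains $\rho' B$ for some smaller $\rho'$, one extracts vectors $e_1,\dots,e_N\in K-\widehat w$, scalars $\beta_j\ge 0$, and a representation showing that every $y\in\mathbb R^m$ with $|y|\le\rho'$ is of the form $\widehat G'(\widehat w)\bigl(\sum t_j e_j\bigr)+s\,\widehat G''(\widehat w)[q,q]$ with $s\ge 0$, $t_j\ge 0$, and $\sum t_j + s\le C|y|$ for a constant $C$ depending only on $\rho'$ and on $\max\|e_j\|$. This is the ``selection'' step that converts the interior condition into a bounded right inverse defined on a cone.

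Next I would run a modified Newton/iteration scheme. Given a target $y$ with $|y|$ small and a perturbed map $G\in C(V,\mathbb R^m)$ close to $\widehat G$, set $w_0=\widehat w + \tau q$ with $\tau=\theta|y|^{1/2}$ for a suitably chosen constant $\theta$; note $w_0\in K$ since $K$ is a convex cone and $q\in K$. Using twice-differentiability of $\widehat G$ at $\widehat w$ one gets
\[
\widehat G(w_0)=\tfrac{1}{2}\tau^2\,\widehat G''(\widehat w)[q,q]+o(\tau^2),
\]
because $\widehat G'(\widehat w)q=0$; so the residual $y-G(w_0)$ is of order $|y|$ and, more importantly, its ``dangerous'' component lies (up to $o(|y|)$) in the ray generated by $-\widehat G''(\widehat w)[q,q]$, which by the selection step above is absorbed by moving along the $e_j$ directions (increasing $s$) together with a controlled change of $\tau$. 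One then corrects iteratively: at each step apply the bounded right inverse on the cone to the current residual, obtaining an increment $\Delta w\in K-\widehat w$ (so the iterate stays in $K$ by convexity), with $\|\Delta w\|\le C\,|\text{residual}|$; the new residual is estimated by the definition of the second derivative at $\widehat w$ plus $\|G-\widehat G\|_{C(V,\mathbb R^m)}$. Choosing the initial $V_1=\{|y|\le\delta\}$ and the neighbourhood $V_y=\{\|G-\widehat G\|<\eta(|y|)\}$ with $\eta$ a small multiple of $|y|$, the residuals contract geometrically, the total displacement from $w_0$ is $O(|y|)=o(|y|^{1/2})$, the iterates remain in $V$, and the limit $w_G(y)$ satisfies $G(w_G(y))=y$ and $\|w_G(y)-\widehat w\|\le\|\tau q\|+O(|y|)\le\kappa|y|^{1/2}$.

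The main obstacle is the interaction between the cone constraint and the genuinely second-order (non-surjective first derivative) nature of the problem: the ordinary Lyusternik argument fails because $\widehat G'(\widehat w)$ need not be onto, so one really must use the curvature term $\widehat G''(\widehat w)[q,q]$, and it is only available along the single ray $\mathrm{conv}$, which forces the $|y|^{1/2}$ scaling and makes the bookkeeping in the iteration delicate — one has to track simultaneously the scalar $\tau$ (contributing at the $|y|^{1/2}$ scale) and the cone increments (at the $|y|$ scale), and ensure the linearisation error $o(\tau^2)=o(|y|)$ genuinely beats the gain from the right inverse. A second, more technical, point is the uniformity in $G$: since $G$ is only continuous (not differentiable), every estimate must be phrased through $\widehat G$'s differentiability at the single point $\widehat w$ plus the sup-norm distance $\|G-\widehat G\|$, which is why $V_y$ must be allowed to shrink as $|y|\to 0$; getting the quantifiers in the right order ($V_1$ first, then $\kappa$, then $V_y$ depending on $y$) is exactly what the statement demands and what the iteration delivers.
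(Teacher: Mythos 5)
There is a genuine gap, and it lies in the heart of your argument: the Newton-type correction scheme. Two hypotheses of the theorem make an iteration of that kind unworkable. First, $\widehat G$ is assumed twice differentiable \emph{only at the single point} $\widehat w$, so the Taylor/linearisation estimate you need at each step is available only for increments measured from $\widehat w$; at an intermediate iterate $w_k\ne\widehat w$ you have no expansion of $\widehat G$ (nor, a fortiori, of $G$) and hence no bound on the new residual of the form ``small relative to the step''. Second, and independently, $G$ is merely continuous with a \emph{fixed} sup-norm distance $\|G-\widehat G\|$ from $\widehat G$: in the residual recursion the term $(G-\widehat G)(w_{k+1})-(G-\widehat G)(w_k)$ is controlled only by the (unquantified) modulus of continuity of $G-\widehat G$, not by the step size, so the residuals cannot contract geometrically and in any case cannot be driven below the level of $\|G-\widehat G\|$. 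Your scheme therefore produces at best a point with $G(w)=y+O(\|G-\widehat G\|)$, whereas the statement demands exact equality $G(w_G(y))=y$; shrinking $V_y$ with $y$ does not repair this, because exactness is needed for each fixed $G\in V_y$.

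The paper's proof avoids iteration altogether. It takes the linear map $\Lambda(w,\beta)=\widehat G'(\widehat w)w+\tfrac12\beta\,\widehat G''(\widehat w)[q,q]$, uses \eqref{l} to obtain a \emph{continuous} right inverse $R=(R_1,R_2)$ of $\Lambda$ with values in $(K-\widehat w)\times\mathbb R_+$ and the linear bound $\|R_1(z)\|+R_2(z)\le\gamma|z|$ (a covering-type selection result from the literature; your finite-point selection step is essentially aimed at the same object, but you must also secure continuity of the selection), and then parametrises candidate solutions in one shot as $\widehat w+R_1(z-\widehat G(\widehat w))+\bigl(R_2(z-\widehat G(\widehat w))\bigr)^{1/2}q$ for $z$ ranging over a small ball in $\mathbb R^m$ — the square root on $R_2$ is exactly where the $|y|^{1/2}$ scaling enters, matching your choice $\tau\sim|y|^{1/2}$. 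The single Taylor estimate at $\widehat w$ plus $\|G-\widehat G\|<|y-\widehat G(\widehat w)|/4$ shows that $\Psi_y(z)=y+z-G(\cdot)$ maps the ball $B_{\mathbb R^m}(\widehat G(\widehat w),2|y-\widehat G(\widehat w)|)$ into itself, and Brouwer's fixed-point theorem in $\mathbb R^m$ yields a fixed point, i.e.\ an exact solution; membership in $K$ and the bound $\kappa|y-\widehat G(\widehat w)|^{1/2}$ then follow from the cone structure and the bound on $R$. In short: because the perturbed map is only continuous and the target is finite-dimensional, exact solvability has to come from a topological fixed-point (or degree) argument applied to a one-step parametrisation based at $\widehat w$, not from a contraction scheme; this is the missing idea in your proposal.
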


\begin{proof}[Proof]
Consider the linear mapping $\Lambda\colon X\times\mathbb R\to\mathbb R^m$, defined by
the formula
$$
\Lambda(w,\beta)=\widehat G'(\widehat w)w+\frac12\beta \widehat G''(\widehat w)[q,q].
$$
From condition \eqref{l} it follows that $0\in\operatorname{int}\Lambda((K-\widehat
w)\times\mathbb R_+)$. In turn, this implies that there exist $\rho>0$ for which
$U_{\mathbb R^m}(0,\rho)\subset \Lambda((K-\widehat w)\times\mathbb R_+)$, a~continuous
mapping $R=(R_1,R_2)\colon U_{\mathbb R^m}(0,\rho)\to (K-\widehat w)\times\mathbb R_+$
and a~constant $\gamma>0$ such that
\begin{equation}\label{l2}
\Lambda(R_1(z),R_2(z))=z,\qquad \|R_1(z)\|_X+R_2(z)\le\gamma|z|
\end{equation}
for all $z\in U_{\mathbb R^m}(0,\rho)$. This is a~particular case of an assertion
of~\cite{ArMT} proved in the case when~$X$ is finite-dimensional, but its proof can be
carried over verbatim also to this case.

The mapping $\widehat G$ is twice differentiable at the point $\widehat w$, and hence
there exists $0<\delta\le\min((8\gamma\rho)^{1/2},(8\gamma\|\widehat G''(\widehat
w)\|)^{-1},1)$ such that $U_X(\widehat w,\delta)\subset V$ and, for all $w\in
U_X(\widehat w,\delta)$,
\begin{multline}\label{l3}
|\widehat G(w)-\widehat G(\widehat w)-\widehat G'(\widehat w)(w-\widehat w)-\frac12
\widehat G''(\widehat w)[w-\widehat w,w-\widehat w]|\\\le\frac 1{16\gamma}\|w-\widehat
w\|^2_X.
\end{multline}

Let $V_1=U_{\mathbb R^m}(\widehat G(\widehat w),\delta^2/16\gamma)$. For any $y\in V_1$
we set $V_y=U_{C(V,\mathbb R^m)}(\widehat G,|y-\widehat G(\widehat w)|/4)$ (assuming with
$y=\widehat G(\widehat w)$ that $V_y=\{\widehat G\}$; in this setting the
relations~\eqref{l1} are straightforward).

Let $y\in V_1$, $y\ne\widehat G(\widehat w)$ and $G\in V_y$. Consider the mapping
$\Psi_y\colon B_{\mathbb R^m}(\widehat G(\widehat w),2|y-\widehat G(\widehat
w)|)\to\mathbb R^m$ defined by
\begin{equation*}
\Psi_y(z)=y+z-G(\widehat w+R_1(z-\widehat G(\widehat w))+(R_2(z-\widehat G(\widehat
w)))^{1/2}q).
\end{equation*}
This definition makes sense. Indeed if $z\in B_{\mathbb R^m}(\widehat G(\widehat
w),2|y-\widehat G(\widehat w)|)$, then $|z-\widehat G(\widehat w)|\le2|y-\widehat
G(\widehat w)|<2(\delta^2/16\gamma)\le\rho$. Further, $\|R_1(z-\widehat G(\widehat
w))\|_X\le\gamma|z-\widehat G(\widehat w)|\le2\gamma|y-\widehat G(\widehat
w)|<2\gamma(\delta^2/16\gamma)<\delta/2$ and $\|(R_2(z-\widehat G(\widehat
w)))^{1/2}q\|_X=(R_2(z-\widehat G(\widehat w)))^{1/2}\le(\gamma|z-\widehat G(\widehat
w)|)^{1/2}\le(2\gamma|y-\widehat G(\widehat
w)|)^{1/2}<(2\gamma)^{1/2}(\delta/4\gamma^{1/2})<\delta/2$. Hence $\widehat
w+R_1(z-\widehat G(\widehat w))+(R_2(z-\widehat G(\widehat w)))^{1/2}q\in \widehat
w+U_X(0,\delta)\subset V$.

We set for brevity, $r(z)=R_1(z-\widehat G(\widehat w))$, $\beta(z)=R_2(z-\widehat
G(\widehat w))$ and $v(z)=r(z)+(\beta(z))^{1/2}q$.

We claim that the range of the mapping $\Psi_y$ lies in the ball $B_{\mathbb
R^m}(\widehat G(\widehat w),2|y-\widehat G(\widehat w)|)$. Indeed, taking into account
the equality
$$
\widehat G'(\widehat w)v(z)+\frac12\beta(z)\widehat G''(\widehat w)[q,q]+\widehat
G(\widehat w)=z,
$$
which follows from the first relation in~\eqref{l2}, using the fact that  $q\in{\rm
Ker}\,\widehat G'(\widehat w)$, employing the elementary relation
\begin{multline*}
-\frac12\widehat G''(\widehat w)[v(z),v(z)]+\frac12\beta(z)\widehat G''(\widehat
w)[q,q]\\=-\widehat G''(\widehat w)[\frac12 r(z)+(\beta(z))^{1/2}q, \ r(z)],
\end{multline*}
invoking the inequality \eqref{l3} and since $G\in V_y$, we have
\begin{multline*} |\Psi_y(z)-\widehat G(\widehat w)|
\le|y-\widehat G(\widehat w)|+|G(\widehat w+v(z))-\widehat G(\widehat
w+v(z))|\\+|\widehat G(\widehat w+v(z))-\widehat G(\widehat w)-\widehat G'(\widehat
w)v(z)-\frac12\widehat G''(\widehat w)[v(z), \
v(z)]|\\
+|\widehat G''(\widehat w)[\frac12 r(z)+(\beta(z))^{1/2}q, r(z)]|\le|y-\widehat
G(\widehat w)|+\frac14 |y-\widehat G(\widehat
w)|\\+\frac1{16\gamma}\|v(z)\|_X^2+\|\widehat G''(\widehat w)\|\|\frac12
r(z)+(\beta(z))^{1/2}q\|_X\|r(z)\|_X.
\end{multline*}
Now $\delta\le1$, and hence, it follows from the above properties that
$2\gamma|y-\widehat G(\widehat w)|<1$ and therefore
\begin{multline*}
\|v(z)\|_X^2\le (\|r(z)\|_X+\|(\beta(z))^{1/2}q\|_X)^2\le(2\gamma|y-\widehat G(\widehat
w)|\\+(2\gamma|y-\widehat G(\widehat w)|)^{1/2})^2\le(2(2\gamma|y-\widehat G(\widehat
w)|)^{1/2})^2=8\gamma|y-\widehat G(\widehat w)|.
\end{multline*}
Further, using the same estimates,
\begin{equation*}
\|\frac12 r(z)+(\beta(z))^{1/2}q\|_X\le
\frac12\|r(z)\|_X+\|(\beta(z))^{1/2}q\|_X<\frac{\delta}4+\frac{\delta}2<\delta.
\end{equation*}
Hence
\begin{multline*}
\|\widehat G''(\widehat w)\| \,\Bigl\|\frac12
r(z)+(\beta(z))^{1/2}q\Bigr\|_X\,\|r(z)\|_X\le\|\widehat G''(\widehat w)\|
\delta2\gamma|y-\widehat G(\widehat w)|\\\le \frac14|y-\widehat G(\widehat w)|.
\end{multline*}
Combining the above estimates we arrive at the required assertion:
\begin{multline*} |\Psi_y(z)-\widehat G(\widehat w)|\le|y-\widehat
G(\widehat w)|+\frac14 |y-\widehat G(\widehat w)|+\frac12 |y-\widehat G(\widehat
w)|\\+\frac14|y-\widehat G(\widehat w)|=2|y-\widehat G(\widehat w)|.
\end{multline*}

The mapping~$\Psi_y$ is continuous \textit{qua} a~composition of continuous mappings.
Hence by Browder's fixed point there exists $\overline z=\overline z(y,G)\in B_{\mathbb
R^m}(\widehat G(\widehat w), 2|y-\widehat G(\widehat w)|)$ such that $\Psi_y(\overline
z)=\overline z$; that is, $G(\widehat w+v(\overline z))=y$. We set $w_G(y)=\widehat
w+v(\overline z)$. Therefore,  $G(w_G(y))=y$ and $\|w_G(y)-\widehat w\|_X=\|v(\overline
z)\|\le (8\gamma|y -\widehat G(\widehat w)|)^{1/2}$.  According to the above $w_G(y)\in
V$. Since $K$~is a~convex cone, we have $w_G(y)\in \widehat w +(K-\widehat w)+K=K$.
Setting $\kappa=(8\gamma)^{1/2}$, we get all the conclusions of the theorem.
\end{proof}


\begin{proof}[\bf Proof of Theorem $\ref{T1}$]
By Proposition \ref{P1} there exists a~tuple
$\widehat{\overline u}=(\widehat u _1,\ldots,\widehat u _k)\in\widehat {\mathcal U}\,^{k}$ for which \eqref{re}~holds.

It is obvious that there exist neighbourhoods $\mathcal O(\widehat \xi \,)$, $\mathcal
O(0)$, $\mathcal O(\widehat u )$ and $\mathcal O(-f(\widehat \xi ,\widehat x ))$ (of,
respectively, the point~$\widehat \xi $, the $\mathbb R^k$-origin, the point ~$\widehat u
$ and the point $-f(\widehat \xi ,\widehat x )$)  that the mapping~$\Phi$
(see~\eqref{31}) with the tuple $\overline u=\widehat{\overline u}$ is bounded on
$\mathcal O(\widehat \xi \,)\times\mathcal O(0)\times\mathcal O(-f(\widehat \xi ,\widehat
x ))\times\mathcal O(\widehat u )$.

The mappings $f$ and $g$ are continuously differentiable with respect to $(\xi,x)$ and
the mapping $(\xi,\overline\alpha,u)\mapsto x(\xi,\overline\alpha,u;\widehat{\overline
u})$ is continuously differentiable with respect to  $(\xi,\overline\alpha,u)$, and hence
reducing if necessary the neighbourhoods $\mathcal O(\widehat \xi \,)$, $\mathcal
O(\widehat x )$, $\mathcal O(0)$ and  $\mathcal O(\widehat u )$ (and assuming that they
are convex), we have from the mean value theorem the inequality
\begin{equation}\label{t1}
|f(\xi,x)-f(\xi',x')|\le c(|\xi-\xi'|+\|x-x'\|_X)
\end{equation}
for some some constant $c>0$, the inequality
\begin{equation}\label{t2}
|g(\xi,x)-g(\xi',x')|\le c(|\xi-\xi'|+\|x-x'\|_X)
\end{equation}
for all $(\xi,x)$ and $(\xi',x')$ from  $\mathcal O(\widehat \xi \,)\times\mathcal O(\widehat x )$, and the inequality
\begin{equation}\label{t3}
\|x(\xi,\overline\alpha,u;\widehat{\overline u})-\widehat x \|_X\le c(|\xi-\widehat \xi \,|+|\overline\alpha|+\|u-\widehat u \|_Z)
\end{equation}
for all $(\xi,\overline\alpha,u)\in\mathcal O(\widehat \xi \,)\times\mathcal O(0)\times\mathcal O(\widehat u )$.

By Proposition \ref{P11} there exist neighbourhoods $\mathcal O_0(\widehat \xi
\,)\subset\mathcal O(\widehat \xi \,)$, $\mathcal O_0(0)\subset\mathcal O(0)$, $\mathcal
O_0(\widehat u )\subset\mathcal O(\widehat u )$ and $\varepsilon_0>0$ such that, for any
$0<\varepsilon\le \varepsilon_0$, there exists  a~continuous mapping
$(\xi,\overline\alpha,u)\mapsto x_\varepsilon(\xi,\overline\alpha, u)$ from $\mathcal
O_0(\widehat \xi \,)\times(\mathcal O_0(0)\cap \mathbb R^k_+)\times\mathcal O_0(\widehat
u )$ into $\mathcal O(\widehat x )$ for which $F(\xi,x_\varepsilon(\xi,\overline\alpha,
u),M_\varepsilon(\overline\alpha, (u,\widehat{\overline u})))=0$ and the relation
\eqref{t4} is valid for all $(\xi,\overline\alpha, u)\in \mathcal O_0(\widehat \xi
\,)\times(\mathcal O_0(0)\cap \mathbb R^k_+)\times\mathcal O_0(\widehat u )$.

Thus, for all $0<\varepsilon\le\varepsilon_0$,
we have a~continuous mapping $\Phi_{\varepsilon}$
on
$\mathcal O_0(\widehat \xi \,)\times(\mathcal O_0(0)\cap \mathbb R^k_+)\times\mathbb R^{m_1}\times\mathcal O_0(\widehat u )$,
which assigns with a~quadruple
$(\xi,\overline\alpha,r, u)$ a~vector from~$\mathbb R^{m_1+m_2}$ by the rule
\begin{equation}\label{fim}
\Phi_{\varepsilon}(\xi,\overline\alpha,r, u)=(f(\xi,x_\varepsilon(\xi,\overline\alpha,u))+r, \
g(\xi,x_\varepsilon(\xi,\overline\alpha, u)))^T.
\end{equation}

We shall employ Theorem \ref{T3}, where $X=\mathbb R^n\times\mathbb R^k\times\mathbb
R^{m_1}\times Z$, $K=\mathbb R^n\times\mathbb R^k_+\times\mathbb R^{m_1}_+\times Z$,
$\widehat w=(\widehat \xi ,0,-f(\widehat \xi ,\widehat x ),\widehat u )$, $V=\mathcal O_0(\widehat \xi \,)\times\mathcal O_0(0)\times\mathbb
R^{m_1}\times\mathcal O_0(\widehat u )$, $\widehat G(w)=\widehat
G(\xi,\overline\alpha,r,u)=\Phi(\xi,\overline\alpha,r,u;\widehat{\overline u})$ and $q=a(\zeta,0,-\widehat
f'[\zeta,h],v)$, where $a>0$ is such that $\|q\|=1$.

It is clear that $q\in K$. Let us check that  $q\in{\rm Ker}\,\widehat G'(\widehat w)$. Indeed, we
have $(\zeta,h,v)\in K(\widehat \xi ,\widehat x ,\widehat u )$, and hence $h=-\widehat F^{-1}_x\widehat F_\xi \zeta-\widehat F^{-1}_x\widehat F_u
v=\widehat x _\xi \zeta+\widehat x _u v$ and   $\widehat f_\xi \zeta+\widehat f_x\widehat x _\xi
\zeta+\widehat f_x \widehat x _{\overline\alpha}0+\widehat f_x\widehat x _u v-\widehat f'[\zeta,h]=\widehat
f_\xi \zeta+\widehat f_xh-\widehat f'[\zeta,h]=\widehat f'[\zeta,h]-\widehat
f'[\zeta,h]=0$. In a~similar manner, $\widehat g_\xi \zeta+\widehat g_x\widehat x _\xi \zeta+\widehat g_x
\widehat x _{\overline\alpha}0+\widehat g_x\widehat x _u v=\widehat g'[\zeta,h]=0$. These equalities show that
$q\in{\rm Ker}\,\widehat G'(\widehat w)$.

Inclusion \eqref{re} implies \eqref{l} (in our setting), because in \eqref{l} the
bracketed set in wider than the corresponding set in~\eqref{re}.

It is clear that $\widehat G(\widehat w)=0$. Let the neighbourhood $V_1$ of the origin in $\mathbb
R^{m_1}\times\mathbb R^{m_2}$ and the constant $\kappa>0$ be as in Theorem~\ref{T3}.

Further, let $W$ be an arbitrary neighbourhood of $(\widehat \xi ,\widehat x )$ and $\rho>0$ such that
$U_{\mathbb R^n\times X}((\widehat \xi ,\widehat x ),\rho)\subset W$. We set $\kappa_0=1+(c+1)\kappa$ ($c$~is
the constant in the inequalities \eqref{t1}--\eqref{t3}). Let $0<r\le\rho^2/\kappa_0^2$ and
let $W_1$ and $W_2$ be neighbourhoods of the $\mathbb R^{m_1}$- and $\mathbb R^{m_2}$-origins
such that $W_1\times W_2\subset U_{\mathbb R^{m_1+m_2}}(0,r)\subset V_1$.

Let $y=(y_1,y_2)\in W_1\times W_2$ and let $V_y$ be the corresponding neighbourhood from Theorem~\ref{T3}.
From \eqref{t1}, \eqref{t2} and \eqref{t4} it follows that there exists
$\varepsilon=\varepsilon(y)\le|y|^{1/2}/2\|\widehat F_x^{-1}\|$ such that $\Phi_{\varepsilon}\in
V_y$. By this theorem there exists a~point $(\xi_y,\overline\alpha_y,r_y,u_y)\in V\cap
K$ for which
\begin{equation}\label{l6}
f(\xi_y,x_\varepsilon(\xi_y,\overline\alpha_{y}, u_y))+r_{y}=y_1, \quad
g(\xi_y,x_\varepsilon(\xi_y,\overline\alpha_{y}, u_y))=y_2
\end{equation}
and
\begin{equation}\label{l5}
|\xi_y-\widehat \xi \,|+ |\overline\alpha_y|+|r_y+f(\widehat \xi ,\widehat x )|+\|u_y-\widehat u \|_Z\le \kappa|y|^{1/2}.
\end{equation}

By  \eqref{t4}, \eqref{t3}, \eqref{l5} and by the choice of $\varepsilon$ we have
$\|x_\varepsilon(\xi_y,\overline\alpha_y,u_y)-\widehat x \|_X+ |\xi_y-\widehat \xi \,|\le
\|x_\varepsilon(\xi_y,\overline\alpha_y,u_y)-x(\xi_y,\overline\alpha_y,u_y;\widehat{\overline u})\|_X+\|x(\xi_y,\overline\alpha_y,u_y;\widehat{\overline u})-\widehat x \|_X+
|\xi_y-\widehat \xi \,|<|y|^{1/2}+c\kappa|y|^{1/2}+\kappa|y|^{1/2}=\kappa_0|y|^{1/2}$.

We set $x_y=x_\varepsilon(\xi_y,\overline\alpha_y,u_y)$ and
$u_y=M_{\varepsilon}(\overline\alpha_y,(u_y,\widehat{\overline u}))$. Then $F(\xi_y,x_y,u_y)=0$.
Since $r_y\ge0$ from~\eqref{l6} it follows that  $f(\xi_y,x_y)\le y_1$ and that $g(\xi_y,x_y)= y_2$.
These inequalities imply that $\|x_y-\widehat x \|_X+ |\xi_y-\widehat \xi \,|\le\kappa_0|y|^{1/2}$.
We have $\kappa_0|y|^{1/2}<\rho$, and hence $(x_y,\xi_y)\in W$.
\end{proof}

Now as another simple corollary to Theorem \ref{T1}
we have the following second-order necessary conditions for a~strong minimum in the following
abstract optimal control problem
\begin{multline}\label{p}
f_0(\xi,x)\to\min,\quad F(\xi,x,u)=0, \quad u\in\mathcal U,\quad f(\xi,x)\le0,\\
g(\xi,x)=0,
\end{multline}
where the set $\mathcal U$ and the mappings $F$, $f$ and $g$
are the same as in the definition of the control system \eqref{1}, a~function
$f_0\colon \mathbb R^n\times X\to\mathbb R$ is also given.

An admissible point $(\widehat \xi ,\widehat x ,\widehat u )$ point for this problem (that is, $(\widehat \xi ,\widehat x ,\widehat u )$ satisfies the
constraints of the problem) is called a~{\it strong minimum} if there exists a~neighbourhood
$W$ of $(\widehat \xi ,\widehat x )$ such that $f_0(\xi,x)\ge f_0(\widehat \xi ,\widehat x )$ for all admissible points
$(\xi,x,u)\in W\times \mathcal U$.

With problem \eqref{p} we shall associate the Lagrange function
\begin{equation*}
\mathcal L(\xi,x,u,\overline\lambda)=\lambda_0f_0(\xi,x)+\langle
y^*,F(\xi,x,u)\rangle+\langle\lambda_1,f(\xi,x)\rangle+\langle\lambda_2,g(\xi,x)\rangle,
\end{equation*}
where $\overline\lambda=(\lambda_0,y^*,\lambda_1,\lambda_2)\in\mathbb R\times Y^*\times(\mathbb
R^{m_1})^*\times(\mathbb R^{m_2})^*$.

For an admissible point $(\widehat \xi ,\widehat x ,\widehat u )$ for problem for which  $\widehat u \in\operatorname{int}\mathcal U$, we define the set
\begin{multline*}
K_0(\widehat \xi ,\widehat x ,\widehat u )=\{\,q=(\zeta,h,v)\in \mathbb R^n\times X\times Z : \widehat F'q=0,\quad
\widehat f'_0[\zeta,h]\le0,\\ \widehat f'[\zeta,h]\le0,\quad \widehat g'[\zeta,h]=0\,\}.
\end{multline*}

\begin{corollary}[\bf Second-order minimum conditions for problem $\eqref{p}$]\label{S1}
Let  $(\widehat \xi ,\widehat x ,\widehat u )$, where $\widehat u \in\operatorname{int}\mathcal U$, be a~strong minimum point
in problem~\eqref{p}. Then if the Basic Assumptions are satisfied\,\footnote{We naturally assume that $f_0$
features the same properties as $f$ and~$g$.}, then, for any  $q=(\zeta,h,v)\in K_0(\widehat \xi ,\widehat x ,\widehat u )$,
there exists a~nonzero tuple
$\lambda=\lambda(q)=(\lambda_0,\lambda_1,\lambda_2)\in\mathbb R_+\times(\mathbb
R^{m_1})_+^*\times(\mathbb R^{m_2})^*$ and a~functional $y^*=y^*(q)\in Y^*$ such that
\begin{equation*}
\begin{aligned}
\mathcal L_\xi(\widehat \xi ,\widehat x ,\widehat u ,\overline\lambda)=0\,\,\Leftrightarrow\,\,\lambda_0\widehat
f_{0\xi}+\widehat F_\xi^*y^*+
\widehat f_\xi^*\lambda_1+{\widehat g_\xi}^*\lambda_2&=0,\\[5pt]
\mathcal L_x(\widehat \xi ,\widehat x ,\widehat u ,\overline\lambda)=0\,\,\Leftrightarrow\,\,\lambda_0\widehat
f_{0x}+\widehat F_x^*y^*+\widehat f_x^*\lambda_1+\widehat g_x^*\lambda_2&=0,
\end{aligned}
\end{equation*}
\begin{equation*}
\langle\lambda_1,f(\widehat \xi ,\widehat x )\rangle=0,
\end{equation*}
\begin{multline*}
\min_{u\in\mathcal U}\mathcal L(\widehat \xi ,\widehat x ,u,\overline\lambda)=\mathcal
L(\widehat \xi ,\widehat x ,\widehat u ,\overline\lambda)\\\Leftrightarrow\,\,\,\min_{u \in \mathcal U}\langle y^*,
F(\widehat \xi ,\widehat x ,u)\rangle=\langle y^*,F(\widehat \xi ,\widehat x ,\widehat u )\rangle=0
\end{multline*}
and
\begin{multline*}
\mathcal L_{(\xi,x,u)(\xi,x,u)}(\widehat \xi ,\widehat x ,\widehat u ,\overline\lambda)[q,q]\ge0\,\,\Leftrightarrow\,\,
\langle y^*,\widehat F''[q,q]\rangle+\lambda_0\widehat
f_0''[(\zeta,h),(\zeta,h)]\\+\langle\lambda_1,\widehat
f''[(\zeta,h),(\zeta,h)]\rangle+\langle\lambda_2,\widehat g''[(\zeta,h),(\zeta,h)]\rangle\ge0.
\end{multline*}

If, for the control system specifying the constraints in problem in problem~\eqref{p},
$\Lambda(\widehat \xi ,\widehat x ,\widehat u ,q)=\emptyset$ for some  $q\in K_0(\widehat \xi ,\widehat x ,\widehat u )$,
then $\lambda_0\ne0$.
\end{corollary}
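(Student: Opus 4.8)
The plan is to derive Corollary~\ref{S1} from Theorem~\ref{T1} by a standard device: convert the cost functional into an extra inequality constraint and apply the controllability theorem by contraposition. More precisely, fix $q=(\zeta,h,v)\in K_0(\widehat\xi,\widehat x,\widehat u)$ and consider the enlarged control system obtained from~\eqref{1} by appending to the vector $f$ the function $f_0(\xi,x)-f_0(\widehat\xi,\widehat x)$ as an additional (zeroth) inequality constraint; call the enlarged inequality map $\widetilde f=(f_0-f_0(\widehat\xi,\widehat x),f)$, which takes values in $\mathbb R^{1+m_1}$. The point $(\widehat\xi,\widehat x,\widehat u)$ is admissible for this enlarged system, since $\widetilde f(\widehat\xi,\widehat x)=(0,f(\widehat\xi,\widehat x))\le 0$ and $\widehat u\in\operatorname{int}\mathcal U$; moreover, by the footnote assumption $f_0$ has the same regularity as $f$, so the Basic Assumptions still hold. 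One checks immediately from the definitions that $K_0(\widehat\xi,\widehat x,\widehat u)$ for the original problem coincides with $K(\widehat\xi,\widehat x,\widehat u)$ computed for the enlarged system (the condition $\widehat f_0'[\zeta,h]\le 0$ is exactly the first coordinate of $\widehat{\widetilde f}\,'[\zeta,h]\le 0$), so our chosen $q$ lies in the latter.

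Next I would argue by contradiction to get the Lagrange multipliers. If for the enlarged system we had $\Lambda(\widehat\xi,\widehat x,\widehat u,q)=\emptyset$, then Theorem~\ref{T1} would assert that the enlarged system is locally controllable at $(\widehat\xi,\widehat x,\widehat u)$. Taking the neighbourhood $W$ of $(\widehat\xi,\widehat x)$ supplied by the strong-minimum hypothesis, controllability furnishes, for any small $y=(y_0,y_1,y_2)$, an admissible triple $(\xi_y,x_y,u_y)\in W\times\mathcal U$ with $f_0(\xi_y,x_y)-f_0(\widehat\xi,\widehat x)\le y_0$, $f(\xi_y,x_y)\le y_1$, $g(\xi_y,x_y)=y_2$; choosing $y_0<0$, $y_1=0$, $y_2=0$ we obtain an admissible point of the original problem~\eqref{p} in $W$ with strictly smaller cost, contradicting the strong minimum. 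Hence $\Lambda(\widehat\xi,\widehat x,\widehat u,q)\ne\emptyset$ for the enlarged system; pick $(y^*,\widetilde\lambda_1,\lambda_2)$ in it, write $\widetilde\lambda_1=(\lambda_0,\lambda_1)\in\mathbb R_+\times(\mathbb R^{m_1})^*_+$, and note $|\widetilde\lambda_1|+|\lambda_2|\ne 0$, i.e. the tuple $(\lambda_0,\lambda_1,\lambda_2)$ is nonzero. Unravelling the five relations of~\eqref{reg} for the enlarged system $\widehat{\widetilde f}=(\widehat f_0,\widehat f)$ and the enlarged adjoint variables gives exactly the four displayed conditions of the Corollary: the $\xi$- and $x$-equations become the stationarity of the Lagrangian $\mathcal L_\xi=\mathcal L_x=0$ (the $\lambda_0\widehat f_{0\xi}$, $\lambda_0\widehat f_{0x}$ terms coming from the zeroth coordinate of $\widehat{\widetilde f}_\xi^*\widetilde\lambda_1$, etc.), the complementarity $\langle\widetilde\lambda_1,\widetilde f(\widehat\xi,\widehat x)\rangle=0$ splits as $\lambda_0\cdot 0+\langle\lambda_1,f(\widehat\xi,\widehat x)\rangle=0$, the minimum condition on $\langle y^*,F(\cdot)\rangle$ is carried over verbatim, and the second-order inequality $\langle y^*,\widehat F''[q,q]\rangle+\langle\widetilde\lambda_1,\widehat{\widetilde f}''[(\zeta,h),(\zeta,h)]\rangle+\langle\lambda_2,\widehat g''[(\zeta,h),(\zeta,h)]\rangle\ge 0$ becomes precisely the last displayed relation.

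For the final assertion, suppose that, for the control system specifying the constraints of~\eqref{p} (i.e. the \emph{original} system, without $f_0$), one has $\Lambda(\widehat\xi,\widehat x,\widehat u,q)=\emptyset$ for our $q\in K_0(\widehat\xi,\widehat x,\widehat u)$. I claim $\lambda_0\ne 0$. Indeed, if $\lambda_0=0$, then the tuple $(y^*,\lambda_1,\lambda_2)$ obtained above satisfies all five relations of~\eqref{reg} for the \emph{original} system: the $\xi$- and $x$-equations lose their $\lambda_0\widehat f_{0\xi}$, $\lambda_0\widehat f_{0x}$ terms and reduce to the first two lines of~\eqref{reg}; the minimum condition and complementarity $\langle\lambda_1,f(\widehat\xi,\widehat x)\rangle=0$ are unchanged; and in the second-order inequality the term $\lambda_0\widehat f_0''[(\zeta,h),(\zeta,h)]=0$ drops out, leaving exactly the fifth relation of~\eqref{reg}. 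Since $\lambda_0=0$ and the tuple $(\lambda_0,\lambda_1,\lambda_2)$ is nonzero, we have $|\lambda_1|+|\lambda_2|\ne 0$, so $(y^*,\lambda_1,\lambda_2)\in\Lambda(\widehat\xi,\widehat x,\widehat u,q)$ for the original system — contradicting the hypothesis $\Lambda(\widehat\xi,\widehat x,\widehat u,q)=\emptyset$. Therefore $\lambda_0\ne 0$.

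The one step requiring genuine care — as opposed to bookkeeping — is verifying that the enlarged problem meets \emph{all} the Basic Assumptions, in particular condition~3) on the mix of controls: but since that condition concerns only $F$ and $F_x$, which are untouched by appending $f_0$ to the inequality data, it transfers without change; likewise condition~2), given the footnote hypothesis on $f_0$. The rest is a transcription of~\eqref{reg} with $\widehat f$ replaced by $(\widehat f_0,\widehat f)$ and $\lambda_1$ by $(\lambda_0,\lambda_1)$, noting that $\lambda_0\ge 0$ because the zeroth coordinate of $\widetilde\lambda_1\in(\mathbb R^{1+m_1})^*_+$ is nonnegative, and that the normalization $|\widetilde\lambda_1|+|\lambda_2|\ne 0$ is exactly nonvanishing of $(\lambda_0,\lambda_1,\lambda_2)$.
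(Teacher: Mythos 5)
Your proposal is correct and follows essentially the same route as the paper: append $f_0(\xi,x)-f_0(\widehat\xi,\widehat x)\le 0$ as an extra inequality constraint, note that $\Lambda$ for this enlarged system at the given $q$ must be nonempty (otherwise Theorem~\ref{T1} would yield local controllability and an admissible point with strictly smaller cost, contradicting the strong minimum), and read off the multipliers, with the final $\lambda_0\ne0$ claim handled by the same comparison with the original system's $\Lambda$. Your write-up is, if anything, slightly more explicit than the paper's about verifying the Basic Assumptions and the last assertion, but the argument is the same.
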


\begin{proof} The proof is by \textit{reductio ad absurdum}.
Assume that there exists $q=(\zeta,h,v)\in K_0(\widehat \xi ,\widehat x ,\widehat u )$ such that only the
tuples $\overline\lambda=(\lambda_0,y^*,\lambda_1,\lambda_2)$, where
$(\lambda_0,\lambda_1,\lambda_2)=0$, satisfy all the constraints in the assertion of the theorem.
This means that if one considers the control system
\begin{multline*}
F(\xi,x,u)=0,\quad u\in \mathcal U,\quad f_0(\xi,x)-f_0(\widehat \xi ,\widehat x )\le0,\quad f(\xi,x)\le0,\\
g(\xi,x)=0
\end{multline*}
and denotes by $\Lambda_1(\widehat \xi ,\widehat x ,\widehat u ,q)$ the analogue of the set
$\Lambda(\widehat \xi ,\widehat x ,\widehat u ,q)$ for this system, then $\Lambda_1(\widehat \xi ,\widehat x ,\widehat u ,q)=\emptyset$.
Hence by Theorem~\ref{T1} this system is locally controllable with respect to the point
$(\widehat \xi ,\widehat x ,\widehat u )$.

Let $W$ be an arbitrary neighbourhood of $(\widehat \xi ,\widehat x )$ and $W_1$, $W_2$
be the corresponding neighbourhoods of the origins in $\mathbb R^{m_1+1}$ and $\mathbb R^{m_2}$
from the definition of controllability. It is clear that
$y(\varepsilon)=((-\varepsilon,0),0)\in W_1\times W_2$ for sufficiently small $\varepsilon>0$.
Hence, by the local controllability, for any such~$\varepsilon$ there exists an element
$(\xi_{y(\varepsilon)},x_{y(\varepsilon)},u_{y(\varepsilon)})\in W\times \mathcal U$
for which $F(\xi_{y(\varepsilon)},x_{y(\varepsilon)},u_{y(\varepsilon)})=0$, \
$f_0(\xi_{y(\varepsilon)},x_{y(\varepsilon)})\le f_0(\widehat \xi ,\widehat x )-\varepsilon$, \
$f(\xi_{y(\varepsilon)},x_{y(\varepsilon)})\le 0$,
$g(\xi_{y(\varepsilon)},x_{y(\varepsilon)})=0$ and
$(\xi_{y(\varepsilon)},x_{y(\varepsilon)})\in W$, contradicting the fact that
$(\widehat \xi ,\widehat x ,\widehat u )$ is a~strong minimum point for problem~\eqref{p}.
\end{proof}

\section{Application to control dynamical systems}

Let $[t_0,t_1]$ be a closed interval  of the real line, $U$~be
an open subset of~$\mathbb R^r$,
$\varphi\colon \mathbb R\times\mathbb R^n\times U\to \mathbb R^n$ be a~mapping
of the variables $t\in \mathbb R$, $x\in\mathbb R^n$ and $u\in U$, and let $f\colon\mathbb
R^n\times\mathbb R^n\to \mathbb R^{m_1}$ and $g\colon\mathbb R^n\times\mathbb R^n\to
\mathbb R^{m_2}$ be mappings of the variables $\zeta_i\in\mathbb R^n$, $i=1,2$.

Let us consider a control dynamical system
\begin{multline}\label{r1} \dot x
=\varphi(t,x,u),\quad u(t)\in U \,\,\,\text{for almost all}\,\,\, t\in[t_0,t_1],\\
f(x(t_0),x(t_1))\le0,\quad g(x(t_0),x(t_1))=0,
\end{multline}
where $x(\cdot)\in AC([t_0,t_1],\mathbb R^n)$ (absolutely continuous vector functions on
$[t_0,t_1]$) and $u(\cdot)\in L_\infty([t_0,t_1],\mathbb R^r)$.

A pair $(\widehat x (\cdot),\widehat u (\cdot))$ will be called an admissible process for this
system if it satisfies all the constraints and there exists a~compact set $K\subset U$
such that $\widehat u (t)\in
K$ for almost all $t\in[t_0,t_1]$.

A control $u(\cdot)$ with the above property will be called {\it regular}.

\begin{definition}\label{def4}
\rm A~system \eqref{r1} will be said to be \textit{locally controllable} with respect to an admissible process
$(\widehat x (\cdot),\widehat u (\cdot))$ if, for each neighbourhood~$W$ of the point~$\widehat x (\cdot)$, there exist
neighbourhoods~$W_1$ and $W_2$ of the origins in~$\mathbb R^{m_1}$ and $\mathbb R^{m_2}$,
respectively, such that, for any
$y=(y_1,y_2)\in W_1\times W_2$ there exists a~pair $(x_y(\cdot),u_y(\cdot))\in AC([t_0,t_1],\mathbb
R^n)\times L_\infty([t_0,t_1],\mathbb R^r)$ satisfying the conditions:
$\dot x_y(t) =\varphi(t,x_y(t),u_y(t))$ and $u_y(t)\in U$ for almost all $t\in[t_0,t_1]$ and which
is such that $x_y(\cdot)\in W$, \ $f(x_y(t_0),x_y(t_1))\le y_1$ and $g(x_y(t_0),x_y(t_1))=y_2$.
\end{definition}

In what follows we assume that the {\it mapping $\varphi$ is continuous together with
its second derivative with respect to $(x,u)$ on $\mathbb R\times\mathbb R^n\times U$
and the mappings~$f$
and~$g$ have continuous second derivatives on $\mathbb R^n\times\mathbb R^n$}.

Given a fixed admissible process $(\widehat x (\cdot),\widehat u (\cdot))$ for system \eqref{r1}, the derivatives
of the mappings~$f$ and~$g$ at the point $(\widehat x (t_0),\widehat x (t_1))$
will be briefly denoted by $\widehat f'$ and $\widehat g'$, their partial derivatives with respect to~$\zeta_1$ and
$\zeta_2$ at the point $(\widehat x (t_0),\widehat x (t_1))$ will be written, respectively, as
$\widehat f_{\zeta_i}$ and $\widehat g_{\zeta_i}$, $i=1,2$.
The adjoint operators will be denoted, respectively, by ${\widehat
{f}_{\zeta_i}}^*$ and ${\widehat {g}_{\zeta_i}}^*$. We shall also write $\widehat
\varphi(t)=\varphi(t,\widehat x (t),\widehat u (t))$, and similarly, for the derivatives $\widehat
\varphi_x(t)=\varphi_x(t,\widehat x (t),\widehat u (t))$ and $\widehat
\varphi_u(t)=\varphi_u(t,\widehat x (t),\widehat u (t))$.

We set $H(t,x,u,p(\cdot))=\langle p(t),\varphi(t,x,u)\rangle$, where $p(\cdot)\colon
[t_0,t_1]\to(\mathbb R^n)^*$.

For brevity, we shall write $w=(x,u)$ and $\eta=(h(t_0),h(t_1))$ if $h(\cdot)\in
C([t_0,t_1],\mathbb R^n)$.

Let $(\widehat x (\cdot),\widehat u (\cdot))$ be an admissible process  for system \eqref{r1}. For any pair
$q(\cdot)=(h(\cdot), v(\cdot))\in C([t_0,t_1],\mathbb R^n)\times L_\infty([t_0,t_1],\mathbb R^r)$
we consider the following system of relations with respect to the variables $p(\cdot)\in
AC([t_0,t_1],(\mathbb R^n)^*)$, $\lambda_1\in(\mathbb R^{m_1})^*_+$ and
$\lambda_2\in(\mathbb R^{m_2})^*$:
\begin{equation}\label{lam}
\begin{cases}
-\dot p=p\,\widehat \varphi_x(t),\quad p(t_0)={\widehat
{f}_{\zeta_1}}^*\lambda_1+{\widehat {g}_{\zeta_1}}^*\lambda_2,\quad p(t_1)=-{\widehat
{f}_{\zeta_2}}^*\lambda_1\\[3pt]-{\widehat {g}_{\zeta_2}}^*\lambda_2;\\[7pt]
\max\limits_{u\in U}H(t,\widehat x (t),u,p(t))=H(t,\widehat x (t),\widehat u (t),p(t))\,\,\text {for
a.a.}\\[3pt] t\in[t_0,t_1];\\[9pt]
\langle\lambda_1,f(\widehat x(t_0),\widehat x(t_1))\rangle=0;\\[9pt]
-\displaystyle\int_{t_0}^{t_1} H_{ww}(t,\widehat x (t),\widehat u (t),p(t))[q(t),q(t)]\,dt+
\langle\lambda_1,\widehat f''[\eta,\eta]\rangle\\[10pt]+\langle\lambda_2,\widehat
g''[\eta,\eta]\rangle\ge0.
\end{cases}
\end{equation}

We let $\Lambda(\widehat x (\cdot),\widehat u (\cdot),q(\cdot))$ denote the set of triples
$(p(\cdot),\lambda_1,\lambda_2)\in AC([t_0,t_1],(\mathbb R^n)^*)\times(\mathbb
R^{m_1})^*_+\times(\mathbb R^{m_2})^*$, which satisfy all the relations in
\eqref{lam} and which are such that $|\lambda_1|+|\lambda_2|\ne0$.

Let $(\widehat x (\cdot),\widehat u (\cdot))$ be an admissible process for system \eqref{r1}. We set
\begin{multline*}
K(\widehat x (\cdot),\widehat u (\cdot))=\{\,q(\cdot)=(h(\cdot),v(\cdot))\in AC([t_0,t_1],\mathbb R^n)\times
L_\infty([t_0,t_1],\mathbb R^r) \\ : \dot h(t)=\widehat
\varphi_x(t)h(t)+\widehat\varphi_u(t)v(t),\quad \widehat f'[h(t_0),h(t_1)]\le0,\\\widehat
g'[h(t_0),h(t_1)]=0\,\}.
\end{multline*}

\begin{theorem}\label{T2}
Let $(\widehat x (\cdot),\widehat u (\cdot))$ be an admissible process for system \eqref{r1}. Assume that there exists
$q(\cdot)=(h(\cdot),v(\cdot))\in K(\widehat x (\cdot),\widehat u (\cdot))$ such that $\Lambda(\widehat x (\cdot),\widehat u (\cdot),q(\cdot))=\emptyset$.
Then system \eqref{r1} is locally  controllable with respect to the process $(\widehat x (\cdot),\widehat u (\cdot))$.

Moreover, there exists a~constant $c_0>0$ such that
$\|x_y(\cdot)-\widehat x (\cdot)\|_{C([t_0,t_1],\mathbb R^n)}\le c_0|y|^{1/2}$
for the variables $y$ and $x_y(\cdot)$ from the definition of controllability
of system~\eqref{r1}
\end{theorem}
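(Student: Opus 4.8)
The plan is to deduce Theorem~\ref{T2} from Theorem~\ref{T1} by realizing the dynamical system \eqref{r1} as an instance of the abstract system \eqref{1}. I would take $X=Y=C([t_0,t_1],\mathbb R^n)$, $Z=L_\infty([t_0,t_1],\mathbb R^r)$, $\mathcal U=\{\,u(\cdot)\in Z:u(t)\in U\text{ for a.a.\ }t\,\}$, let the parameter $\xi\in\mathbb R^n$ stand for $x(t_0)$, and put
\[
F(\xi,x(\cdot),u(\cdot))(t)=x(t)-\xi-\int_{t_0}^{t}\varphi(s,x(s),u(s))\,ds ,
\]
with the abstract inequality/equality maps $(\xi,x(\cdot))\mapsto f(\xi,x(t_1))$ and $(\xi,x(\cdot))\mapsto g(\xi,x(t_1))$. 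On the set $\{F=0\}$ one has $x(t_0)=\xi$ and $\dot x=\varphi(t,x,u)$, so the admissible points of \eqref{1} are exactly the admissible processes of \eqref{r1}, the two boundary maps agree there with $f(x(t_0),x(t_1))$ and $g(x(t_0),x(t_1))$, and $\widehat u\in\operatorname{int}\mathcal U$ since $\widehat u(\cdot)$ is regular (its values lie in a compact subset of the open set $U$, so a small $L_\infty$-ball around $\widehat u$ stays in $\mathcal U$). In what follows I write $\widehat F$, $\widehat F'$, $\widehat F''$ for this abstract $F$ and its derivatives at the reference point.

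Next I would check the Basic Assumptions for this realization. The spaces $X,Y,Z$ are Banach. Since $\varphi$ is $C^2$ in $(x,u)$ with continuous derivatives and $\widehat u$ takes values in a compact subset of $U$, the superposition operator $(x(\cdot),u(\cdot))\mapsto\varphi(\cdot,x(\cdot),u(\cdot))$ is twice continuously differentiable from a neighbourhood of $(\widehat x(\cdot),\widehat u(\cdot))$ in $C\times L_\infty$ into $L_\infty$; composing with the bounded linear primitive operator $L_\infty\to C$, and using that $F$ is affine in $\xi$, shows that $F$ has a continuous second derivative near the reference point, while the two boundary maps are $C^2$ as compositions of $f,g\in C^2$ with the continuous linear evaluation $x(\cdot)\mapsto x(t_1)$; moreover $\widehat F_x=I-V$, where $V$ is a Volterra integral operator on $C$, is invertible. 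This is Assumptions~1 and~2. Assumption~3 is the existence of the mix of controls for the integral operator $F$: given $u_0,\dots,u_k$ and weights $\overline\alpha$, one builds $M_\varepsilon(\overline\alpha,\overline u)$ by the classical sliding-regime (chattering) construction so that $\int_{t_0}^{\cdot}\varphi(s,x(s),M_\varepsilon(s))\,ds$ and $\int_{t_0}^{\cdot}\varphi_x(s,x(s),M_\varepsilon(s))\,ds$ approximate uniformly the corresponding convex combinations; this is Tikhomirov's mix, cf.~\cite{T},~\cite{AMT}.

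I would then match the two settings. With $q=(\zeta,h(\cdot),v(\cdot))$ the relation $\widehat F'q=0$ reads $h(t_0)=\zeta$, $\dot h=\widehat\varphi_x(t)h+\widehat\varphi_u(t)v$, and the linearized boundary forms become $\widehat f'[h(t_0),h(t_1)]$, $\widehat g'[h(t_0),h(t_1)]$, so the abstract set \eqref{k} coincides with $K(\widehat x(\cdot),\widehat u(\cdot))$. For the multipliers, $Y^*=C([t_0,t_1],\mathbb R^n)^*$ is the space of $\mathbb R^n$-valued measures, and a routine computation with Fubini's theorem shows that the abstract adjoint equations $\widehat F_\xi^*y^*+\widehat f_\xi^*\lambda_1+\widehat g_\xi^*\lambda_2=0$, $\widehat F_x^*y^*+\widehat f_x^*\lambda_1+\widehat g_x^*\lambda_2=0$ force $y^*$ to be of the form (density)$\,dt$ plus an atom at $t_1$; setting $p(t):=y^*([t,t_1])$ gives $p(\cdot)\in AC$ with $-\dot p=p\,\widehat\varphi_x(t)$, $p(t_0)=\widehat f_{\zeta_1}^*\lambda_1+\widehat g_{\zeta_1}^*\lambda_2$, $p(t_1)=-\widehat f_{\zeta_2}^*\lambda_1-\widehat g_{\zeta_2}^*\lambda_2$. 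The same computation turns $\langle y^*,F(\widehat\xi,\widehat x,u)\rangle$ into $-\int_{t_0}^{t_1}\bigl(H(t,\widehat x(t),u(t),p(t))-H(t,\widehat x(t),\widehat u(t),p(t))\bigr)\,dt$ and $\langle y^*,\widehat F''[q,q]\rangle$ into $-\int_{t_0}^{t_1}H_{ww}(t,\widehat x(t),\widehat u(t),p(t))[q(t),q(t)]\,dt$, whereas the second-order boundary terms reduce (using $\zeta=h(t_0)$) to $\langle\lambda_1,\widehat f''[\eta,\eta]\rangle$ and $\langle\lambda_2,\widehat g''[\eta,\eta]\rangle$. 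Together with a standard measurable-selection argument (openness of $U$ and continuity of $H$ in $u$) identifying $\min_{u\in\mathcal U}\langle y^*,F(\widehat\xi,\widehat x,u)\rangle=0$ with the pointwise maximum condition of \eqref{lam}, and with $\langle\lambda_1,f(\widehat\xi,\widehat x)\rangle=\langle\lambda_1,f(\widehat x(t_0),\widehat x(t_1))\rangle$, this yields a bijection between $\Lambda(\widehat\xi,\widehat x,\widehat u,q)$ and $\Lambda(\widehat x(\cdot),\widehat u(\cdot),q(\cdot))$; in particular the hypothesis provides $q\in K(\widehat\xi,\widehat x,\widehat u)$ with $\Lambda(\widehat\xi,\widehat x,\widehat u,q)=\emptyset$.

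By Theorem~\ref{T1} the abstract system \eqref{1} is then locally controllable with respect to $(\widehat\xi,\widehat x,\widehat u)$, with the estimate $\|x_y-\widehat x\|_X+|\xi_y-\widehat\xi|\le\kappa_0|y|^{1/2}$. Reading this back through the identification — given a $C$-neighbourhood $W$ of $\widehat x(\cdot)$, apply abstract controllability with the neighbourhood $B(\widehat\xi,1)\times W$ of $(\widehat\xi,\widehat x)$ — one obtains, for $y$ small, a triple $(\xi_y,x_y(\cdot),u_y(\cdot))$ with $F=0$ (hence $\dot x_y=\varphi(t,x_y,u_y)$, $u_y(t)\in U$ a.a., $x_y(t_0)=\xi_y$), $f(x_y(t_0),x_y(t_1))=f(\xi_y,x_y(t_1))\le y_1$, $g(x_y(t_0),x_y(t_1))=y_2$, and $x_y(\cdot)\in W$; this is exactly Definition~\ref{def4}, and $\|x_y(\cdot)-\widehat x(\cdot)\|_{C([t_0,t_1],\mathbb R^n)}\le c_0|y|^{1/2}$ with $c_0=\kappa_0$. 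I expect the main obstacle to be the dictionary between the abstract multiplier set and the costate/maximum-principle form — identifying $C^*$ with measures, carrying out the Fubini bookkeeping with the correct signs and with the atom at $t_1$, and executing the measurable-selection step for the pointwise maximum condition; verifying Assumption~3 (the mix) is the other nontrivial ingredient, but it is classical and can be quoted from \cite{T},~\cite{AMT}.
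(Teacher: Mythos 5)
Your proposal is correct and follows essentially the same route as the paper: realize \eqref{r1} as the abstract system \eqref{1} with the integral operator $F$, verify the Basic Assumptions (invertibility of the Volterra-type $\widehat F_x$, the mix/chattering construction for Assumption~3), identify the cone \eqref{k} with $K(\widehat x(\cdot),\widehat u(\cdot))$ via $q_1=(h(t_0),h(\cdot),v(\cdot))$, translate the abstract multipliers into the costate $p(\cdot)$ through the bounded-variation representation of $y^*$ and a Fubini computation for the second-order and maximum terms, and then invoke Theorem~\ref{T1} together with its $|y|^{1/2}$ estimate. The only differences are cosmetic: the paper delegates the verification of Assumption~3 and of the equality relations in \eqref{reg}/\eqref{lam} to~\cite{AM}, where you sketch these steps directly, and only the direction of the multiplier correspondence you actually construct (from $(y^*,\lambda_1,\lambda_2)$ to $(p(\cdot),\lambda_1,\lambda_2)$) is needed, not a full bijection.
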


\begin{proof} With a~control dynamical system \eqref{r1} we shall associate a~control system of the form~\eqref{1}.
Let $X=Y=C([t_0,t_1],\mathbb R^n)$, $Z=L_\infty([t_0,t_1],\mathbb R^r)$ and
$\mathcal U=\{\,u(\cdot)\in L_\infty([t_0,t_1],\mathbb R^r) : u(t)\in U$ for almost all
$t\in[t_0,t_1]\,\}$. We define the mapping $F\colon \mathbb R^n\times X\times\mathcal U\to
Y$ by the formula
\begin{equation*}
F(\xi,x(\cdot),u(\cdot))(t)=-\xi+x(t)-\int_{t_0}^t\varphi(\tau,x(\tau),u(\tau))\,d\tau,\quad
 \forall\,\,t\in[t_0,t_1].
\end{equation*}
The mappings  $f$ and $g$ in~\eqref{r1} will be considered as mappings $f\colon \mathbb
R^n\times X\to \mathbb R^{m_1}$ and $g\colon \mathbb R^n\times X\to \mathbb R^{m_2}$,
which associate with each pair $(\xi,x(\cdot))$ the vectors $f(\xi,x(t_1))$ and
$g(\xi,x(t_1))$, respectively.

Let us consider the control system
\begin{multline}\label{p11}
F(\xi,x(\cdot),u(\cdot))(\cdot)=0, \quad u(\cdot)\in \mathcal U, \quad f(\xi,x(\cdot))\le0,\\
g(\xi,x(\cdot))=0,
\end{multline}
which looks like system \eqref{1}.

If $(\widehat x (\cdot),\widehat u (\cdot))$ is an admissible process for system \eqref{r1} (and hence the
control $\widehat u (\cdot)$ is regular), then it is easily checked that in this case
$\widehat u (\cdot)\in\operatorname{int}\mathcal U$. Hence the point $(\widehat x (t_0),\widehat x (\cdot),\widehat u (\cdot))$ is admissible for
the control system \eqref{p11}.

System \eqref{p11} satisfies the Basic Assumptions. Indeed,
$1)$ clearly holds. Further, standard arguments show  that the assumptions on
the mappings in system~\eqref{r1} guarantee condition~$2)$, and besides,
the operator $F_{x(\cdot)}(\widehat \xi ,\widehat x (\cdot),\widehat u (\cdot))$ is well-known to be invertible.
That $3)$~holds was proved in~\cite{AM} (under weaker assumptions).

By the hypothesis, $q(\cdot)=(h(\cdot),v(\cdot))\in K(\widehat x (\cdot),\widehat u (\cdot))$.
It easily follows that the triple
$q_1(\cdot)=(h(t_0),h(\cdot),v(\cdot))$ lies in the cone~\eqref{k}, which was written down for system
\eqref{p11}.

We let $\Lambda(\widehat \xi ,\widehat x (\cdot),\widehat u (\cdot),q_1(\cdot))$ denote the set of triples
$(y^*,\lambda_1,\lambda_2)\in Y^*\times(\mathbb R^{m_1})^*_+\times(\mathbb R^{m_2})^*$,
$|\lambda_1|+|\lambda_2|\ne0$, which satisfy the relations in~\eqref{reg},
as written for system~\eqref{p11}. We claim that
$\Lambda(\widehat \xi ,\widehat x (\cdot),\widehat u (\cdot),q_1(\cdot))=\emptyset$ under the hypotheses of the theorem.

Indeed, in~\cite{AM} it was shown that if a~tuple $(y^*,\lambda_1,\lambda_2)\in
Y^*\times(\mathbb R^{m_1})^*_+\times(\mathbb R^{m_2})^*$, where
$|\lambda_1|+|\lambda_2|\ne0$, satisfies the equalities in~\eqref{reg}, then there exists
$p(\cdot)\in AC([t_0,t_1],(\mathbb R^n)^*)$ such that the tuple $(p(\cdot),\lambda_1,\lambda_2)$,
satisfies the equalities  in~\eqref{lam}.

Let us now show that if $(y^*,\lambda_1,\lambda_2)$ also satisfies the inequality in~\eqref{reg},
then $(p(\cdot),\lambda_1,\lambda_2)$ satisfies the inequality in~\eqref{lam}. According to~\cite{AM},
the functional $y^*$, \textit{qua} a~linear continuous functional on
$C([t_0,t_1], \mathbb R^n)$, is defined by a~function of bounded variation~$\mu(\cdot)$, which
is related with the function $p(\cdot)$ via the relation: $p(t)=\mu(t_1)-\mu(t)$ if $t\in[t_0,t_1)$ and
$p(t_1)=-{\widehat {f}_{\zeta_2}}^*\lambda_1-{\widehat {g}_{\zeta_2}}^*\lambda_2$.
Now if $q(\cdot)=(h(\cdot),v(\cdot))$ and $w=(x,u)$, then we have, by changing the order of integration,
\begin{multline*}
\langle y^*,\widehat F''[q_1(\cdot),q_1(\cdot)]\rangle=\int_{t_0}^{t_1}
\left(-\int_{t_0}^t\widehat\varphi_{ww}(\tau)[q(\tau),q(\tau)]\,d\tau\right)d\mu(t)\\
=-\int_{t_0}^{t_1}\langle p(t),\widehat\varphi_{ww}(t)[q(t),q(t)]\rangle\,dt\\=-\int_{t_0}^{t_1}
H_{ww}(t,\widehat x (t),\widehat u (t),p(t))[q(t),q(t)]\,dt.
\end{multline*}
Next it is clear that if $\eta=(h(t_0),h(t_1))$, then the second and third terms on the left of
the inequality in~\eqref{reg} are of the form $\langle\lambda_1,\widehat
f''[\eta,\eta]\rangle$ and $\langle\lambda_2,\widehat g''[\eta,\eta]\rangle$; that is,
$(p(\cdot),\lambda_1,\lambda_2)$ satisfies inequality in~\eqref{lam}, thereby showing that
$(p(\cdot),\lambda_1,\lambda_2)\in\Lambda(\widehat x (\cdot),\widehat u (\cdot),q(\cdot))$.

Thus if $\Lambda(\widehat x (\cdot),\widehat u (\cdot),q(\cdot))=\emptyset$, then $\Lambda(\widehat \xi ,\widehat x (\cdot),\widehat u (\cdot),q_1(\cdot))=\emptyset$.
Hence, by Theorem~\ref{T1} system~\eqref{p11} is locally controllable with respect to the point $(\widehat x (t_0),\widehat x (\cdot),\widehat u (\cdot))$.
This readily implies the local controllability of system~\eqref{r1} with respect to the process
$(\widehat x (\cdot),\widehat u (\cdot))$.
\end{proof}

From this theorem, as in the abstract setting, we immediately derive the second-order
necessary conditions for the following optimal control problem
\begin{multline}\label{z}
f_0(x(t_0),x(t_1))\to\min,\quad \dot x =\varphi(t,x,u),\quad u(t)\in U,\\
f(x(t_0),x(t_1))\le0,\quad g(x(t_0),x(t_1))=0,
\end{multline}
where the set $U$ and the mappings $\varphi$, $f$ and~$g$ are the same as in system~\eqref{r1},
the function $f_0$ is defined on $\mathbb R^n\times\mathbb R^n$ and has the same properties as
 $f$ and~$g$.

A point $(\widehat x (\cdot),\widehat u (\cdot))$ admissible for this problem is called a~{\it strong minimum point} if
there exists a~neighbourhood of the function $\widehat x (\,\cdot\,)$ in $C([t_0,t_1],\mathbb R^n)$ such that,
for all admissible points $(x(\cdot),u(\cdot))$ with $x(\cdot)$  from this neighbourhood,
the inequality $f_0(x(t_0),x(t_1))\ge f_0(\widehat x (t_0),\widehat x (t_1))$ holds.

We set
\begin{multline*}
K_0(\widehat x (\cdot),\widehat u (\cdot))=\{\,q(\cdot)=(h(\cdot),v(\cdot))\in AC([t_0,t_1],\mathbb R^n)\\\times
L_\infty([t_0,t_1],\mathbb R^r)  : \dot h(t)=\widehat
\varphi_x(t)h(t)+\widehat\varphi_u(t)v(t),\\\widehat
f'_0[h(t_0),h(t_1)]\le0,\quad\widehat f'[h(t_0),h(t_1)]\le0,\quad\widehat
g'[h(t_0),h(t_1)]=0\,\}.
\end{multline*}

\begin{corollary}[\bf Second-order minimum conditions for problem $\eqref{z}$]\label{S2}
If $(\widehat x (\cdot),\widehat u (\cdot))$ is a~strong minimum point in problem $\eqref{z}$, then for any  $q(\cdot)\in
K_0(\widehat x (\cdot),\widehat u (\cdot))$ there exists a~nonzero tuple $(\lambda_0,\lambda_1,\lambda_2)\in \mathbb
R_+\times(\mathbb R^{m_1})^*_+\times(\mathbb R^{m_2})^*$ and a~function $p(\cdot)\in
AC([t_0,t_1],(\mathbb R^n)^*)$ such that
\begin{equation*}\begin{cases}
-\dot p=p\,\widehat \varphi_x(t),\quad p(t_0)=\lambda_0{\widehat
{f}}_{0\zeta_1}+{\widehat {f}_{\zeta_1}}^*\lambda_1+{\widehat
{g}_{\zeta_1}}^*\lambda_2,\\[7pt] p(t_1)=-\lambda_0{\widehat
{f}}_{0\zeta_2}-{\widehat
{f}_{\zeta_2}}^*\lambda_1-{\widehat {g}_{\zeta_2}}^*\lambda_2;\\[7pt]
\max\limits_{u\in U}H(t,\widehat x (t),u,p(t))=H(t,\widehat x (t),\widehat u (t),p(t)),\,\,\,\text {for a.a.}\,\,
t\in[t_0,t_1];\\[10pt]
\langle\lambda_1,f(\widehat x(t_0),\widehat x(t_1))\rangle=0;\\[7pt]
-\displaystyle\int_{t_0}^{t_1}
H_{ww}(t,\widehat x (t),\widehat u (t),p(t))[q(t),q(t)]\,dt+\lambda_0\widehat f''_0[\eta,\eta]\\[10pt]+
\langle\lambda_1,\widehat f''[\eta,\eta]\rangle+\langle\lambda_2,\widehat g''[\eta,\eta]\rangle\ge0.
\end{cases}
\end{equation*}

If, for the system
specifying the constraints in problem \eqref{z},   $\Lambda(\widehat x (\cdot),\widehat u (\cdot),
q(\cdot))=\emptyset$ for some  $q(\cdot)\in K_0(\widehat x (\cdot),\widehat u (\cdot))$, then $\lambda_0\ne0$.
\end{corollary}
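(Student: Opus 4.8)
The plan is to repeat, in the dynamical setting, the \textit{reductio ad absurdum} used for Corollary~\ref{S1}, with Theorem~\ref{T2} in place of Theorem~\ref{T1}. Suppose the assertion fails: there is $q(\cdot)=(h(\cdot),v(\cdot))\in K_0(\widehat x(\cdot),\widehat u(\cdot))$ such that every tuple $(\lambda_0,p(\cdot),\lambda_1,\lambda_2)\in\mathbb R_+\times AC([t_0,t_1],(\mathbb R^n)^*)\times(\mathbb R^{m_1})^*_+\times(\mathbb R^{m_2})^*$ satisfying all the displayed relations must have $(\lambda_0,\lambda_1,\lambda_2)=0$. I would then introduce the auxiliary control dynamical system
\begin{multline*}
\dot x=\varphi(t,x,u),\quad u(t)\in U,\quad f_0(x(t_0),x(t_1))-f_0(\widehat x(t_0),\widehat x(t_1))\le0,\\
f(x(t_0),x(t_1))\le0,\quad g(x(t_0),x(t_1))=0,
\end{multline*}
that is, system~\eqref{r1} with the extra coordinate $f_0-f_0(\widehat x(t_0),\widehat x(t_1))$ appended to the inequality constraint, and write $\Lambda_1(\widehat x(\cdot),\widehat u(\cdot),q(\cdot))$ for the set~\eqref{lam} built for this enlarged system (so that its inequality multiplier lives in $(\mathbb R^{m_1+1})^*_+$, identified with pairs $(\lambda_0,\lambda_1)\in\mathbb R_+\times(\mathbb R^{m_1})^*_+$).

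The first step is to observe that a nonzero element of $\Lambda_1(\widehat x(\cdot),\widehat u(\cdot),q(\cdot))$ is precisely a tuple $(p(\cdot),(\lambda_0,\lambda_1),\lambda_2)$ with $(\lambda_0,\lambda_1,\lambda_2)\ne0$ that satisfies the four relations of the corollary: the adjoint equation with the boundary terms $p(t_0)=\lambda_0\widehat f_{0\zeta_1}+\widehat f_{\zeta_1}^*\lambda_1+\widehat g_{\zeta_1}^*\lambda_2$ and the analogous one at $t_1$, the maximum principle for $H$, the complementary slackness $\langle\lambda_1,f(\widehat x(t_0),\widehat x(t_1))\rangle=0$ (the $f_0$-component of the slackness being automatically zero), and the second-order inequality with the $\lambda_0\widehat f_0''[\eta,\eta]$ term. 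By the assumption no such tuple exists, hence $\Lambda_1(\widehat x(\cdot),\widehat u(\cdot),q(\cdot))=\emptyset$. Moreover $q(\cdot)\in K(\widehat x(\cdot),\widehat u(\cdot))$ for the enlarged system, since $q(\cdot)\in K_0(\widehat x(\cdot),\widehat u(\cdot))$ gives in particular $\widehat f_0'[h(t_0),h(t_1)]\le0$. Theorem~\ref{T2} therefore applies and the enlarged system is locally controllable with respect to $(\widehat x(\cdot),\widehat u(\cdot))$.

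Next I would fix any neighbourhood $W$ of $\widehat x(\cdot)$ in $C([t_0,t_1],\mathbb R^n)$, take the neighbourhoods $W_1\subset\mathbb R^{m_1+1}$, $W_2\subset\mathbb R^{m_2}$ of the origins from the definition of controllability, and note that $y(\varepsilon)=((-\varepsilon,0),0)\in W_1\times W_2$ for all small $\varepsilon>0$. Local controllability then yields a process $(x_{y(\varepsilon)}(\cdot),u_{y(\varepsilon)}(\cdot))$ with $x_{y(\varepsilon)}(\cdot)\in W$, satisfying the dynamics, $u_{y(\varepsilon)}(t)\in U$ a.e., $f_0(x_{y(\varepsilon)}(t_0),x_{y(\varepsilon)}(t_1))\le f_0(\widehat x(t_0),\widehat x(t_1))-\varepsilon$, $f(x_{y(\varepsilon)}(t_0),x_{y(\varepsilon)}(t_1))\le0$ and $g(x_{y(\varepsilon)}(t_0),x_{y(\varepsilon)}(t_1))=0$; this is an admissible point of problem~\eqref{z} with $x_{y(\varepsilon)}(\cdot)$ arbitrarily close to $\widehat x(\cdot)$ and strictly smaller cost, contradicting strong minimality. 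For the final assertion, if $\Lambda(\widehat x(\cdot),\widehat u(\cdot),q(\cdot))=\emptyset$ for some $q(\cdot)\in K_0(\widehat x(\cdot),\widehat u(\cdot))$ and the multiplier tuple obtained had $\lambda_0=0$, then $(p(\cdot),\lambda_1,\lambda_2)$ with $|\lambda_1|+|\lambda_2|\ne0$ would satisfy exactly the relations in~\eqref{lam}, i.e., lie in $\Lambda(\widehat x(\cdot),\widehat u(\cdot),q(\cdot))$, a contradiction; hence $\lambda_0\ne0$.

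The proof is almost entirely bookkeeping, so I do not expect a genuine obstacle. The one point that needs care is the verification that $\Lambda_1(\widehat x(\cdot),\widehat u(\cdot),q(\cdot))$ — the set~\eqref{lam} for the augmented inequality constraint $(f_0-f_0(\widehat x(t_0),\widehat x(t_1)),f)$ — really unwinds into the four relations appearing in the corollary, but this is the same transcription between the abstract conditions~\eqref{reg} and the dynamical conditions~\eqref{lam} that was already carried out in the proof of Theorem~\ref{T2}, so it may simply be quoted.
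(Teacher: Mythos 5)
Your proposal is correct and follows exactly the route the paper intends: the paper proves Corollary~\ref{S2} by remarking that the argument is the same as for Corollary~\ref{S1}, i.e.\ the \textit{reductio ad absurdum} with the cost appended as an extra inequality constraint and Theorem~\ref{T2} invoked for the augmented system, which is precisely what you do. The points you flag for care (the automatic vanishing of the $f_0$-slackness term and the transcription between~\eqref{reg} and~\eqref{lam}) are handled in the paper just as you say, by the computation already carried out in the proof of Theorem~\ref{T2}.
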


The proof of this corollary is the same as that for Corollary~\ref{S1}.

\vskip20pt

Let us give some comments on the results obtained in this section.
Together with relations \eqref{lam} we shall also consider the relation
\begin{equation}\label{ns}
H_u(t,\widehat x (t),\widehat u (t),p(t))=0,\,\,\,\,\text{for a.a.}\,\, t\in[t_0,t_1].
\end{equation}

We let $\Lambda_{\mathrm{max}}(\widehat x (\cdot),\widehat u (\cdot))$  the set of triples
$(p(\cdot),\lambda_1,\lambda_2)\in AC([t_0,t_1]$, $(\mathbb R^n)^*)\times(\mathbb
R^{m_1})^*_+\times(\mathbb R^{m_2})^*$, $|\lambda_1|+|\lambda_2|\ne0$, satisfying
all the relations in~\eqref{lam} except for the last inequality. We also denote by
$\Lambda(\widehat x (\cdot),\widehat u (\cdot))$ the set of similar triples, but with the maximum condition replaced by
condition~\eqref{ns}.

The local controllability of system \eqref{r1} with respect to admissible process
$(\widehat x (\cdot), \widehat u (\cdot))$ in the case of an open~$U$ is well-known to
follow from the complete controllability of a~linear approximation to this system in
a~neighbourhood of this point. Kalman (see, for example, \cite{LM}) seems to be the first
to prove this fact. In our terms this is equivalent to saying that
\begin{equation}\label{7c}
\Lambda(\widehat x (\cdot),\widehat u (\cdot))=\emptyset.
\end{equation}
In \cite{AM} it was shown, in particular, that the local controllability takes place
also under weaker assumptions;  namely, when
\begin{equation}\label{8c}
\Lambda_{\mathrm{max}}(\widehat x (\cdot),\widehat u (\cdot))=\emptyset,
\end{equation}
where $U$ is an arbitrary set and a~control $\widehat u (\cdot)$ not necessarily regular
(if $U$ is open and $\widehat u (\cdot)$ is a~regular control, then this readily follows from Theorem~\ref{T2}
with $q(\cdot)=0$). A~similar result can also be derived from the maximum principle for the
geometric optimal control problem; see~\cite{AS}. The paper~\cite{P}
puts forward conditions for local controllability for a~dynamical system with
fixed end-points, which can be looked upon as sufficient conditions that
\eqref{8c} holds in the setting $\widehat x (\cdot)=0$, $\widehat u (\cdot)=0$ (even though the local controllability in~\cite{P}
is understood in a~somewhat more general sense: the point $(0,0)$ is not
assumed to be admissible for the corresponding control system).

Theorem \ref{T2} gives sufficient conditions for local controllability in the setting when
relations \eqref{7c} and/or \eqref{8c} may fail to hold.
Problems of local controllability for dynamical systems linear in the control
were extensively studied in a~similar setting.
In this case, if the set $U$~is open, then conditions \eqref{7c} and
\eqref{8c} are clearly equivalent.
The most comprehensive account on necessary and sufficient conditions for local controllability for such problems
may be found in~\cite{S} (see also
the references cited in~\cite{AS}). The character of such conditions is different from that given by Theorem~\ref{T2}.
We also note the paper~\cite{AJ}, which puts forward necessary and sufficient conditions for
local controllability under the condition of 2-normality
of a~dynamical system (which was introduced in~\cite{AJ}). These conditions make sense for problems
when condition \eqref{7c}  fails to hold.

The second-order necessary conditions for optimality for the optimal control
problem \eqref{p}, as given as a~direct corollary to Theorem~\ref{T2},
are similar to those form~\cite{O}, but which were obtained without the assumption about the
piecewise continuity of the optimal control itself.

\section{Control dynamical systems of the first order of abnormality. Examples}

Let $(\widehat x (\cdot),\widehat u (\cdot))$ be an admissible process for system \eqref{r1}. The set
$\Lambda_{\mathrm{max}}(\widehat x (\cdot),\widehat u (\cdot))$, when nonempty, is a~normed cone in
a~finite-dimensional space. A~system \eqref{r1} with respect to the process
$(\widehat x (\cdot),\widehat u (\cdot))$ will be said to have the $k$th {\it order of abnormailty $k\in\mathbb N$} if the
dimension of the linear hull of $\Lambda_{\mathrm{max}}(\widehat x (\cdot),\widehat u (\cdot))$ is~$k$.

This definition can be looked upon as an extension of the order of
abnormality, which was introduced by Bliss~see \cite{B}).

Given a~normed linear space, a~cone containing the origin of the space is called a~{\it pointed cone} if
it does not contain any proper subspace.

An admissible process $(\widehat x (\cdot),\widehat u (\cdot))$ is called {\it singular} if the cone
$\Lambda_{\mathrm{max}}(\widehat x (\cdot),\widehat u (\cdot))\cup\{0\}$ is not pointed.

This is easily seen to be equivalent to the definition that there exists a~nonzero
triple $(p(\cdot),0,\lambda_2)\in\Lambda_{\mathrm{max}}(\widehat x (\cdot),\widehat u (\cdot))$ such that the mapping
$u\mapsto H(t,\widehat x (t),u,p(t))$ is constant for almost all $t\in[t_0,t_1]$.

In the case when the order of abnormailty is equal to one, we give one corollary to Theorem~\ref{T2}
which is useful in applications.

We let $Q(p(\cdot),\lambda_1,\lambda_2)[q(\cdot),q(\cdot)]$ denote the expression on the left in the last
inequality in~\eqref{lam}.

\begin{corollary}\label{S3}
Assume that system \eqref{r1} with respect to a~nonsingular admissible process $(\widehat x (\cdot),\widehat u (\cdot))$
has the first order of abnormailty and that $(p(\cdot),\lambda_1,\lambda_2)\in
\Lambda_{\mathrm{max}}(\widehat x (\cdot),\widehat u (\cdot))$. If there exists an element  $q(\cdot)\in K(\widehat x (\cdot),\widehat u (\cdot))$
such that $Q(p(\cdot),\lambda_1,\lambda_2)[q(\cdot),q(\cdot)]<0$, then system \eqref{r1}
is locally controllable with respect to the process $(\widehat x (\cdot),\widehat u (\cdot))$.
\end{corollary}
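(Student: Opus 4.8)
The plan is to derive this corollary from Theorem~\ref{T2} by showing that the hypotheses force $\Lambda(\widehat x (\cdot),\widehat u (\cdot),q(\cdot))=\emptyset$ for the element $q(\cdot)$ in the statement. Recall that $\Lambda(\widehat x (\cdot),\widehat u (\cdot),q(\cdot))$ consists of triples $(p(\cdot),\lambda_1,\lambda_2)\in\Lambda_{\mathrm{max}}(\widehat x (\cdot),\widehat u (\cdot))$ (that is, satisfying all relations in~\eqref{lam} except the last) that additionally satisfy $Q(p(\cdot),\lambda_1,\lambda_2)[q(\cdot),q(\cdot)]\ge0$. So it suffices to prove: every triple in $\Lambda_{\mathrm{max}}(\widehat x (\cdot),\widehat u (\cdot))$ gives a \emph{strictly negative} value of $Q$ at this fixed $q(\cdot)$.

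First I would use the first-order-of-abnormality assumption. Since the linear hull of $\Lambda_{\mathrm{max}}(\widehat x (\cdot),\widehat u (\cdot))$ is one-dimensional, any triple $(\widetilde p(\cdot),\widetilde\lambda_1,\widetilde\lambda_2)\in\Lambda_{\mathrm{max}}$ is a scalar multiple $c\,(p(\cdot),\lambda_1,\lambda_2)$ of the given one, and because $\Lambda_{\mathrm{max}}$ is a cone of positive functionals (with $\lambda_1$ in the positive cone) the scalar must satisfy $c\ge0$; the nonsingularity of the process is what rules out that the one-dimensional hull degenerates in a way that would allow $c<0$ or that would make the maximum condition hold for a nontrivial subspace. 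Next, observe that $Q(p(\cdot),\lambda_1,\lambda_2)[q(\cdot),q(\cdot)]$ is linear in the triple $(p(\cdot),\lambda_1,\lambda_2)$ (it is built from $-\int H_{ww}[q,q]$ plus $\langle\lambda_1,\widehat f''[\eta,\eta]\rangle+\langle\lambda_2,\widehat g''[\eta,\eta]\rangle$, all linear in $p$, $\lambda_1$, $\lambda_2$). Hence $Q(c\,p(\cdot),c\,\lambda_1,c\,\lambda_2)[q(\cdot),q(\cdot)]=c\,Q(p(\cdot),\lambda_1,\lambda_2)[q(\cdot),q(\cdot)]$. Since $c\ge0$ and, by hypothesis, $Q(p(\cdot),\lambda_1,\lambda_2)[q(\cdot),q(\cdot)]<0$, we get $Q(\widetilde p(\cdot),\widetilde\lambda_1,\widetilde\lambda_2)[q(\cdot),q(\cdot)]=c\,Q(p(\cdot),\lambda_1,\lambda_2)[q(\cdot),q(\cdot)]\le0$, and this is $<0$ unless $c=0$, i.e.\ unless the triple is zero. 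But the zero triple has $|\lambda_1|+|\lambda_2|=0$, so it is excluded from $\Lambda_{\mathrm{max}}$. Therefore no admissible triple satisfies the last inequality in~\eqref{lam}, i.e.\ $\Lambda(\widehat x (\cdot),\widehat u (\cdot),q(\cdot))=\emptyset$. Applying Theorem~\ref{T2} with this $q(\cdot)$ yields local controllability of system~\eqref{r1} with respect to $(\widehat x (\cdot),\widehat u (\cdot))$.

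The main obstacle I expect is the first step: carefully justifying, from nonsingularity plus first order of abnormality, that every element of $\Lambda_{\mathrm{max}}(\widehat x (\cdot),\widehat u (\cdot))$ is a \emph{nonnegative} multiple of the fixed triple $(p(\cdot),\lambda_1,\lambda_2)$ — in other words, that the normed cone $\Lambda_{\mathrm{max}}(\widehat x (\cdot),\widehat u (\cdot))\cup\{0\}$ is a pointed ray rather than a full line. One has to check that if some $(p'(\cdot),0,\lambda_2')$ were the negative of $(p(\cdot),\lambda_1,\lambda_2)$ (forcing $\lambda_1=0$), then the maximum condition would hold for that costate as well, making $u\mapsto H(t,\widehat x (t),u,p(t))$ constant and contradicting nonsingularity; together with the sign constraint $\lambda_1\in(\mathbb R^{m_1})^*_+$ this pins the cone down to a ray. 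The rest — linearity of $Q$ in the multipliers and the elimination of the zero triple via $|\lambda_1|+|\lambda_2|\ne0$ — is routine.
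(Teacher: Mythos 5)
Your proposal is correct and is essentially the paper's own argument: both hinge on the fact that first order of abnormality together with nonsingularity forces every element of $\Lambda_{\mathrm{max}}(\widehat x(\cdot),\widehat u(\cdot))$ to be a positive multiple of the given triple $(p(\cdot),\lambda_1,\lambda_2)$, so by homogeneity of $Q$ in the multipliers the last inequality in \eqref{lam} must fail at the given $q(\cdot)$, and Theorem~\ref{T2} applies. The only cosmetic difference is that you establish $\Lambda(\widehat x(\cdot),\widehat u(\cdot),q(\cdot))=\emptyset$ directly, while the paper runs the same computation as a contradiction with the negation of local controllability.
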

\begin{proof}[Proof]
Assume that system \eqref{r1} is not locally controllable with respect to the process
$(\widehat x (\cdot),\widehat u (\cdot))$. Then $\Lambda(\widehat x (\cdot),\widehat u (\cdot),q'(\cdot))\ne\emptyset$ for any $q'(\cdot)\in
K(\widehat x (\cdot),\widehat u (\cdot))$. Let $(p'(\cdot),\lambda'_1,\lambda'_2)\in\Lambda(\widehat x (\cdot),\widehat u (\cdot),q'(\cdot))$.
Since the order of abnormailty is~$1$ and since the pair $(\widehat x (\cdot),\widehat u (\cdot))$
is nonsingular, we have
$(p(\cdot),\lambda_1,\lambda_2)=\alpha(p'(\cdot),\lambda'_1,\lambda'_2)$ for some $\alpha>0$.
But then $Q(p(\cdot),\lambda_1,\lambda_2)[q'(\cdot),q'(\cdot)]=\alpha
Q(p'(\cdot),\lambda'_1,\lambda'_2)[q'(\cdot),q'(\cdot)]\ge0$, contradicting the assumption.
\end{proof}

We now give two examples. The first one illustrates Corollary~\ref{S3}, while the
second one pertains to Theorem~\ref{T2} and shows that the hypotheses of the theorem
cannot be discarded.

{\bf Example 1}. Consider the dynamical system
\begin{multline}\label{ex1}
\dot x_1=u,\quad\dot x_2=u^2-x_1^2,\quad u(t)\in\mathbb R\, \,\,\,\text{for almost
all}\,\,\, t\in[0,T],\\ x_1(0)=x_2(0)= x_1(T)=x_2(T)=0,
\end{multline}
where $T>0$.

A process $(\widehat x (\cdot),\widehat u (\cdot))=(0,0)$, where $\widehat x (\cdot)=(\widehat x _1(\cdot),\widehat x _2(\cdot))$, is admissible for this
system. According to the general statement of the problem, here $f=0$, and so we assume that
$g=(x_1(0),x_2(0),x_1(T),x_2(T))^T$.  A~simple calculation shows that the pairs
$(p(\cdot),\lambda_2)=((0,\alpha),(0,\alpha,0,\alpha))$, where $\alpha\le0$, and only such
pairs satisfy the first two relations of~\eqref{lam}, and hence
$$
\Lambda_{\mathrm{max}}(0,0)=\{\,(p(\cdot),\lambda_2)=((0,\alpha),(0,\alpha,0,\alpha)),\,\,\alpha<0\,\}.
$$
Clearly, $\Lambda_{\mathrm{max}}(0,0)\cup\{0\}$ is a~pointed cone (ray), and hence
$(0,0)$ is not a~singular process, the order of abnormailty of system \eqref{ex1} with respect to
this process being equal to~1. We shall employ Corollary~\ref{S3}.

In our setting it is easily checked that
\begin{multline*}
K(0,0)=\{\,q(\cdot)=(h(\cdot),v(\cdot))\in AC([0,T],\mathbb R^2)\times L_\infty([0,T])\\
: \dot h_1(\cdot)=v(\cdot),\quad \dot h_2(\cdot)=0,\quad h_i(0)=h_i(T)=0,\,\,\, i=1,2\,\}.
\end{multline*}
Let $(p(\cdot),\lambda_2)\in \Lambda_{\mathrm{max}}(0,0)$. A~direct calculation shows that,
for any  $q(\cdot)\in K(0,0)$,
\begin{multline*}
Q(p(\cdot),\lambda_2)[q(\cdot),q(\cdot)]=-2\alpha\int_{0}^{T}(v^2(t)-h_1^2(t))\,dt=\\
-2\alpha\int_{0}^{T}(\dot h_1^2(t)-h_1^2(t))\,dt.
\end{multline*}
It is well known (and is easily checked) that the integral is nonnegative on $[0,T]$ if
$T\le\pi$; its values on  $[0,T]$ are negative if $T>\pi$. But then
$Q(p(\cdot),\lambda_2)[q(\cdot),q(\cdot)]$ with  $T>\pi$ assumes negative values, and hence
by Corollary~\ref{S3} our system is locally controllable with respect to the process $(0,0)$.

\vskip10pt

{\bf Example 2}. Now consider the dynamical system
\begin{multline}\label{ex2}
\dot x_1=u,\quad\dot x_2=u^3,\quad u(t)\in (a,+\infty)\, \,\,\text{for almost all}\,\,\,
t\in[0,1],\\  x_1(0)=x_2(0)=0,\quad x_1(1)=x_2(1)=1,
\end{multline}
where $a<1$.

The process $(\widehat x (\cdot),\widehat u (\cdot))$, where $\widehat x _1(t)=\widehat x _2(t)=t$, \ $\widehat u (t)=1$, \ $t\in[0,1]$,
is admissible for system \eqref{ex2}. A~direct analysis of the first two relations in~\eqref{lam}
shows that if a~pair $(p(\cdot),\lambda_2)$ satisfies these relations, then we necessarily
have $p(\cdot)=(\alpha,-\alpha/3)$, $\lambda_2=(\alpha,-\alpha/3,\alpha,-\alpha/3))$ and
\begin{equation}\label{9c}
\alpha\left(u-\frac{u^3}3\right)\le\alpha\,\frac23,\quad \forall\,\,u\in (a,+\infty),
\end{equation}
for some $\alpha\in\mathbb R$.

There are two cases to consider. $1)$ $a<-2$. In this setting, inequality \eqref{9c} is
possible only if $\alpha=0$. Hence $\Lambda_{\mathrm{max}}(\widehat x (\cdot),\widehat u (\cdot))=\emptyset$ and \textit{a~fortiori}
$\Lambda(\widehat x (\cdot),\widehat u (\cdot),q(\cdot))=\emptyset$ for any $q(\cdot)\in K(\widehat x (\cdot),\widehat u (\cdot))$, where
\begin{multline*}
K(\widehat x (\cdot),\widehat u (\cdot))=\{\,q(\cdot)=(h(\cdot),v(\cdot))\in AC([0,1],\mathbb R^2)\times L_\infty([0,1])\\
: \dot h_1(\cdot)=v(\cdot),\quad \dot h_2(\cdot)=3v(\cdot),\quad h_i(0)=h_i(1)=0,\,\,\, i=1,2\,\}.
\end{multline*}
Now Theorem~\ref{T2} shows that system \eqref{ex2} is locally controllable.

$2)$  $a\ge-2$. We claim that in this case
$\Lambda(\widehat x (\cdot),\widehat u (\cdot),q(\cdot))\ne\emptyset$ for any  $q(\cdot)\in K(\widehat x (\cdot),\widehat u (\cdot))$ and
at the same time system \eqref{ex2} is not locally controllable with respect to the process $(\widehat x (\cdot),\widehat u (\cdot))$ (that is,
the condition
$\Lambda(\widehat x (\cdot),\widehat u (\cdot),q(\cdot))=\emptyset$ is essential for local controllability).

Indeed, inequality \eqref{9c} holds for any $\alpha>0$ and becomes an
equality at the point $u=1$. Let $(p(\cdot),\lambda_2)\in \Lambda_{\mathrm{max}}(0,0)$.
It is easily checked that $Q(p(\cdot),\lambda_2)[q(\cdot),q(\cdot)]=2\alpha\int_0^1 v^2(t)\,dt$ for any
$q(\cdot)=(h(\cdot), v(\cdot))\in K(\widehat x (\cdot),\widehat u (\cdot))$ and therefore,
$\Lambda(\widehat x (\cdot),\widehat u (\cdot),q(\cdot))\ne\emptyset$.

To show that the system is not controllable with respect to the process $(\widehat x (\cdot),\widehat u (\cdot))$
it clearly suffices to show that, for any $\varepsilon>0$, there is no process
$(x_y(\cdot), u_y(\cdot))$ ($x_y(\cdot)=(x_{1y}(\cdot),x_{2y}(\cdot))$) that satisfies
the differential equation in~\eqref{ex2}, $u_y(t)\in(a,+\infty)$ for almost all
$t\in[t_0,t_1]$, $(x_{1y}(0),x_{2y}(0))=(0,0)$ and $(x_{1y}(1),x_{2y}(1))=(1,
1-\varepsilon)$.
Indeed, if this it were so, then setting $\eta_y(\cdot)=u_y(\cdot)-1$ and taking into account that $\eta_y(t)>-3$ for almost all
$t\in[t_0,t_1]$ and $\int_0^1\eta_y(t)\,dt=0$, we arrive at the contradiction:
\begin{multline*}
-\varepsilon=x_{2y}(1)-1=\int_0^1u_y^3(t)\,dt-1=\int_0^1(1+\eta_y(t))^3\,dt-1
\\=3\int_0^1\eta_y(t)\,dt+\int_0^1\eta_y^2(t)(3+\eta_y(t))\,dt\ge0.
\end{multline*}
Thus the system \eqref{ex2} is not locally controllable with respect to the process $(\widehat x (\cdot),\widehat u (\cdot))$.

\end{document}